\newtheorem{thm}{Theorem}[section]
\newtheorem{prop}[thm]{Proposition}
\newtheorem{lem}[thm]{Lemma}
\newtheorem{cor}[thm]{Corollary}
\theoremstyle{definition}
\theoremstyle{remark}
\newtheorem{rem}[thm]{Remark}
\numberwithin{equation}{section}
\newcommand{\comment}[1]{}
\newcommand{\field}[1]{\mathbb{#1}}
\newcommand{\R}{\field{R}}
\newcommand{\Z}{\field{Z}}
\def\mF{\mathcal{F}}
\def\mE{\mathcal{E}}
\def\mS{\mathcal{S}}
\def\mM{\mathcal{M}}
\DeclareMathOperator{\rk}{rk}
\DeclareMathOperator{\ind}{ind}
\DeclareMathOperator{\spec}{Sp}
\DeclareMathOperator{\ch}{ch}
\DeclareMathOperator{\End}{End}
\title{Positive scalar curvature and the Euler class 
}
\author{Jianqing Yu\footnote{School of Mathematical Sciences,
University of Science and Technology of China,
96 Jinzhai Road, Hefei, Anhui 230026,
P. R. China. (jianqing@ustc.edu.cn)} \
and Weiping Zhang\footnote{Chern Institute of Mathematics \& LPMC, Nankai University, Tianjin 300071, P. R. China. (weiping@nankai.edu.cn)}
}
\date{\ }
\begin{document}

\maketitle

\begin{abstract}
We prove the following  generalization 
of the classical  Lichnerowicz vanishing theorem: if $F$ is an oriented flat vector bundle over a closed spin manifold $M$ such that $TM$ carries a   metric of positive scalar curvature, then
 $ \langle \widehat A(TM)\,e(F),[M] \rangle=0$, where $e(F)$ is the Euler class of $F$.
\end{abstract}



\section{Introduction}

 The classical Lichnerowicz vanishing theorem  \cite{MR0156292} states that if a closed spin manifold $M$ admits a Riemannian metric of positive scalar curvature, then  its Hirzebruch $\widehat A$-genus (cf. \cite[\S 1.6.3]{MR1864735}) vanishes: $ \widehat A(M)=0$.

Let $F$ be an oriented real flat vector bundle over a closed   manifold $M$. Let
$\widehat A(TM)$ denote the Hirzebruch $\widehat A$-class of $TM$ (cf. \cite[p. 13]{MR1864735}) and
 $e(F)$ denote the Euler class of $F$ (cf. \cite[\S 3.4]{MR1864735}).
The   purpose of this paper is to prove the following generalization of the    Lichnerowicz vanishing theorem.

\begin{thm}\label{t0.1} If $TM$ is spin and carries a metric of positive scalar curvature, then  $ \langle \widehat A(TM)\,e(F),[M] \rangle=0$.
\end{thm}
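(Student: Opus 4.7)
The plan is to realise $\langle \widehat{A}(TM)\,e(F),[M]\rangle$ as the index of a twisted spin Dirac operator on $M$ and then to invoke a Lichnerowicz--Weitzenb\"ock positivity argument driven by the hypothesis $s(TM)>0$.

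\smallskip

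In the model case where $F$ admits a spin structure and its flat connection is compatible with some Euclidean metric on $F$, the construction is clean. Write $\rk F=2k$ and let $S^{\pm}(F)$ be the two halves of the associated complex spinor bundle. The splitting-principle identity
\[
\ch\bigl(S^{+}(F)-S^{-}(F)\bigr)=e(F)\cdot \widehat{A}(F)^{-1},
\]
together with the vanishing of all Pontryagin forms of $F$ computed from its flat connection (so that $\widehat{A}(F)=1$ in $H^{*}(M;\mathbb{R})$), identifies the index of the spin Dirac operator of $M$ twisted by the $\mathbb{Z}/2$-graded bundle $S^{+}(F)-S^{-}(F)$ with $\langle \widehat{A}(TM)\,e(F),[M]\rangle$. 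Since the twisting connection is flat, the Lichnerowicz formula reduces to $D^{2}=\nabla^{*}\nabla+s(TM)/4$, and $s(TM)>0$ forces the kernel (hence the index) to vanish.

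\smallskip

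The main obstacle is to remove the auxiliary hypotheses on $F$: a general oriented flat bundle need not be spin, and its flat connection need not be orthogonal. A naive substitute by the always-defined $\mathbb{Z}/2$-graded bundle $\Lambda^{*}F_{\mathbb{C}}$ yields only the strictly weaker statement $\langle \widehat{A}(TM)\,e(F)^{2},[M]\rangle=0$, because an analogous Chern--Weil computation gives $\ch(\Lambda^{\mathrm{ev}}F_{\mathbb{C}}-\Lambda^{\mathrm{odd}}F_{\mathbb{C}})$ proportional to $e(F)^{2}\widehat{A}(F)^{-2}$. A finite-cover reduction to the spin case is not generally available either, since $w_{2}(F)$ need not be killed by any finite cover of $M$. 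To bridge the gap I would work with the Clifford-algebra bundle $\mathrm{Cl}(F)$, which exists for any oriented $F$ and inherits a (possibly non-metric) flat connection from $\nabla^{F}$, and attempt to construct a modified Dirac-type operator whose Atiyah--Singer index formula isolates a ``square root'' of $e(F)^{2}$ while the Weitzenb\"ock formula absorbs the non-metric part $\omega=\nabla^{F}-\nabla^{F,u}$ (relative to a chosen metric on $F$) into a lower-order term dominated by $s(TM)/4$. Carrying out this square-root step in a manner that is compatible both with the identification of the index as $\langle \widehat{A}(TM)\,e(F),[M]\rangle$ and with the Lichnerowicz positivity is the principal technical difficulty.
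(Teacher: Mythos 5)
Your model-case computation is correct, and the two obstacles you identify (no spin structure on $F$, and $\nabla^F$ not preserving any metric) are the right ones, but the proposal stops short of resolving either, and the sketch of a fix for the second would not work as stated.

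On the spin issue: the paper sidesteps it entirely by working with $\Lambda(F^*)$ and grading it not by parity but by the ``signature'' involution $\tau(F,g^F)$ of \eqref{tauF}, which exists as soon as $F$ is oriented of even rank. The resulting virtual bundle $\Lambda_+(F^*)-\Lambda_-(F^*)$ has, for flat $F$, Chern character $2^{\rk F/2}e(F)$, so the twisted Dirac index is already $2^{\rk F/2}\langle\widehat A(TM)\,e(F),[M]\rangle$ (see \eqref{asindex}); no square root of $e(F)^2$ is needed. Your $\mathrm{Cl}(F)$ idea is essentially this bundle (as a left $\mathrm{Cl}(F)$-module, $\Lambda(F^*_{\mathbb C})$ is $2^{\rk F/2}$ copies of the spinor module), but the step you leave open --- choosing the chirality grading of the left Clifford action rather than the even/odd grading --- is precisely what removes the square.

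On the non-metric issue, which is the genuine difficulty: the proposed strategy of absorbing the non-metric part $\omega(F,g^F)$ into ``a lower-order term dominated by $s(TM)/4$'' is not available. By \eqref{curvat} the unitarized connection $\nabla^{F,e}$ has curvature $-\tfrac14\omega(F,g^F)^2$, and there is no a priori bound on $\omega$ in terms of the scalar curvature of $M$; the Lichnerowicz-type inequality \eqref{001} of Proposition~\ref{t2.1} is precisely an \emph{unverified} hypothesis. The paper's actual resolution is to pass to Connes's fibration $\pi:\mM\to M$ whose fiber over $x$ is the space of Euclidean metrics on $F_x$. There the tautological metric on $\mF=\pi^*F$ admits a Euclidean connection whose curvature vanishes in horizontal directions (Lemma~\ref{t3.1}), and an adiabatic rescaling $g^{T\mM}_\varepsilon=\varepsilon^2 g^{\mE}\oplus g^{\mE^\perp}$ makes the scalar-curvature contribution grow like $1/\varepsilon^2$ while the remaining curvature terms grow only like $1/\varepsilon$. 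One then works on the compact truncation $\mM_R$ with an Atiyah--Patodi--Singer boundary value problem for a deformed sub-Dirac operator $D^{\mM_R}_\varepsilon+\widehat c(\rho\sigma)/(\varepsilon R)$, identifies its index with $\ind D^M_+$ by the analytic Riemann--Roch of Dai--Zhang, and establishes the vanishing by the localization estimates of Sections~\ref{s4.1}--\ref{s4.3}. None of this is anticipated by the ``absorb into $s(TM)/4$'' plan, so the proposal has a genuine gap at its central step.
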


\begin{rem}\label{t0.2}
Recall that Milnor \cite{MR0095518} constructs on any oriented closed surface $\Sigma_g$ of genus $g>1$ a rank two oriented flat vector bundle $F_g$ such that $\langle e(F_g),[\Sigma_g]\rangle\neq 0$. 
Let $M$ be any closed spin manifold such that $TM$ carries a metric of positive scalar curvature, then $T(M\times\Sigma_g)$ also carries a metric of positive scalar curvature, and one gets by Theorem \ref{t0.1} that
\begin{align}\label{0.1}
0=\left\langle\widehat A\left(T\left(M\times \Sigma_g\right)\right)e\left(\pi^*F_g\right),\left[M\times \Sigma_g\right]\right\rangle= \widehat A(M )\left\langle e\left(F_g\right),\left[\Sigma_g\right]\right\rangle,
\end{align}
where    $\pi:M\times\Sigma_g\rightarrow \Sigma_g$ denotes the natural  projection,
which implies $\widehat A(M)=0$. Thus Theorem \ref{t0.1} indeed recovers the Lichnerowicz theorem.
\end{rem}

The following consequence of Theorem \ref{t0.1}, which generalizes the well-known fact that $T\Sigma_g$ ($g>1$) does not carry a metric of positive scalar curvature, is of independent interest.

\begin{cor}\label{t0.5}
Let $F$ be an oriented flat vector bundle over a closed spin manifold $M$  such that    ${\rm rk}(F) =\dim M$.  If $\langle e(F),[M]\rangle\neq 0$, then $TM$ does not carry  a metric of positive scalar curvature.
\end{cor}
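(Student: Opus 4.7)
The plan is to deduce the corollary as the contrapositive of Theorem \ref{t0.1}, applied to the given flat bundle $F$. The only thing to verify is that the hypothesis $\langle e(F),[M]\rangle\neq 0$ forces $\langle \widehat A(TM)\,e(F),[M]\rangle\neq 0$ under the rank condition $\rk(F)=\dim M$.

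First I would set $n=\dim M$ and note that the Euler class of an oriented rank-$n$ real vector bundle lies in $H^n(M;\R)$, so $e(F)$ is already of top degree. Writing $\widehat A(TM)=1+\widehat A_1(TM)+\widehat A_2(TM)+\cdots$ with $\widehat A_k(TM)\in H^{4k}(M;\R)$, the cup product $\widehat A(TM)\,e(F)$ has a degree-$n$ component equal to $e(F)$ (all higher terms $\widehat A_k(TM)\,e(F)$ sit in degree $n+4k>n$, hence vanish when evaluated on $[M]$). Therefore
\begin{equation*}
\bigl\langle \widehat A(TM)\,e(F),[M]\bigr\rangle=\bigl\langle e(F),[M]\bigr\rangle\neq 0.
\end{equation*}

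If $TM$ admitted a metric of positive scalar curvature, Theorem \ref{t0.1} would force the left-hand side to vanish, a contradiction. Hence $TM$ carries no such metric. There is essentially no analytic obstacle here; the whole content is packaged inside Theorem \ref{t0.1}, and the corollary is merely an application together with the elementary degree bookkeeping above.
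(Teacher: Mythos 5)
Your proof is correct and is essentially the only argument available: since $\rk(F)=\dim M$, the Euler class $e(F)$ is already of top degree, the degree-$0$ term of $\widehat A(TM)$ is $1$, and all higher-degree terms of $\widehat A(TM)$ push $e(F)$ past the top degree, so $\langle \widehat A(TM)\,e(F),[M]\rangle=\langle e(F),[M]\rangle$. The paper treats this corollary as immediate from Theorem \ref{t0.1} and gives no separate proof; your degree bookkeeping is precisely what is being tacitly invoked.
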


Inspired by \cite{MR0461521}, we call a closed manifold $M$ a Smillie manifold if $TM$ carries a flat connection, while the Euler characteristic $\chi(M)\neq 0$. By Corollary \ref{t0.5}, one sees that  
a closed spin
Smillie manifold does not admit a Riemannian metric of positive scalar curvature.

On the other hand, Theorem \ref{t0.1} can also be reformulated as follows.

\begin{thm}\label{t0.4}
If a closed spin manifold $M$ admits a Riemannian metric of positive scalar curvature, then
for any representation $\rho:\pi_1(M)\rightarrow GL(q,\R)^+$, $q\in \mathbb{N}$, one has
\begin{equation}\label{vanishhigher}
\left\langle \widehat{A}(TM)\,f^*\left(e\left(E\pi\times_{\rho} \R^q\right)\right),[M]\right\rangle=0,
\end{equation}
where  $E\pi\times_{\rho} \R^{q}$, with $\pi=\pi_1(M)$, is the oriented real flat vector bundle over $B\pi$
associated to the representation $\rho$, and $f:M\rightarrow B\pi$ is the classifying map.
\end{thm}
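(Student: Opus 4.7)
The plan is to reduce Theorem \ref{t0.4} to Theorem \ref{t0.1} by identifying $f^*(E\pi\times_\rho \R^q)$ with a genuine oriented real flat vector bundle over $M$ and invoking the earlier result.

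First I would invoke the classical correspondence between oriented real flat vector bundles of rank $q$ over $M$ and conjugacy classes of representations $\rho:\pi_1(M)\to GL(q,\R)^+$: to each such $\rho$ is associated the flat bundle $F_\rho:=\widetilde M\times_\rho \R^q$, where $\widetilde M\to M$ is the universal covering, equipped with the flat connection descended from the trivial connection on $\widetilde M\times \R^q$ and the orientation induced by the standard orientation of $\R^q$ (using $\det\rho>0$).

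Next I would identify $F_\rho$ with $f^*(E\pi\times_\rho \R^q)$. Since by construction $f:M\to B\pi$ classifies the universal covering, one has a canonical isomorphism of principal $\pi$-bundles $\widetilde M\cong f^*E\pi$. Forming the associated vector bundle commutes with pullback, yielding an isomorphism $F_\rho\cong f^*(E\pi\times_\rho \R^q)$ that preserves both the flat structure and the orientation. Combined with naturality of the Euler class, $e(F_\rho)=f^*(e(E\pi\times_\rho \R^q))$, so Theorem \ref{t0.1} applied to $F=F_\rho$ gives precisely \eqref{vanishhigher}.

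There is no substantive obstacle here; the work consists in unwinding the definitions carefully and checking that the isomorphism $F_\rho\cong f^*(E\pi\times_\rho \R^q)$ is one of flat oriented bundles, not merely of topological bundles. Indeed the argument shows that Theorems \ref{t0.1} and \ref{t0.4} are equivalent reformulations of the same statement: conversely, any oriented real flat vector bundle $F$ over $M$ is recovered as $F_\rho$ where $\rho$ is its holonomy representation, so Theorem \ref{t0.4} also implies Theorem \ref{t0.1}.
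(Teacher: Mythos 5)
Your reduction is correct and is exactly how the paper treats Theorem \ref{t0.4}: the paper presents it as a reformulation of Theorem \ref{t0.1}, relying on the same standard correspondence $f^*(E\pi\times_{\rho}\R^q)\cong \widetilde M\times_\rho\R^q$ between flat oriented bundles and representations of $\pi_1(M)$, together with naturality of the Euler class. Nothing further is needed.
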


Thus  our result   provides a new evidence to support    the famous
Gromov-Lawson  conjecture on the vanishing of
higher $\widehat{A}$-genera  \cite{MR569070}.

The main difficulty in proving Theorem \ref{t0.1} is that the flat connection on $F$ need not preserve any metric on $F$. Similar   difficulties have appeared in the foliation situations studied   in \cite{MR866491} and \cite{Zhangfoliation},  where one uses the Connes fibration introduced in \cite{MR866491}   to overcome such difficulties.

Our proof of Theorem \ref{t0.1} is index theoretic. On one hand, it is inspired by \cite{Zhangfoliation} and makes use of the Connes fibration, as well as    the constructions of deformed sub-Dirac operators. On the other hand, a significant technical difference with respect to what in \cite{Zhangfoliation}  is that while in \cite{Zhangfoliation}, one 
identifies the characteristic number in question with the indices of certain   nonexplicit pseudodifferential operators on a closed manifold, here we will work  intrinsically on  a  sub-manifold with boundary of the Connes fibration    and apply directly the analytic localization techniques developed in \cite{MR1188532},  \cite{MR1870658} and \cite{TZ}.

The rest of this  paper is organized as follows. In Section \ref{sec:flat+index},
we provide an index theoretic  interpretation of  $\langle \widehat A(TM)e(F),[M]\rangle$,
and  prove a simple vanishing result for it.
In Section \ref{sec:connes}, we construct the sub-Dirac operators as well as the key deformations of them on the
Connes fibration associated to the flat vector bundle $F$,   and reduce Theorem \ref{0.1} to an estimating result concerning the   Atiyah-Patodi-Singer elliptic boundary valued problem.
In Section \ref{sec:proofmain}, we
prove the above estimating result.

$\ $

\noindent\textbf{Acknowledgements.} We would like to thank   Xiaonan Ma
for helpful discussions. This work was partially supported by
the China Postdoctoral Science Foundation (No.\,2015T80654) and NNSFC (No.\,11401552 and No.\,11221091).

\section{Flat vector bundles and the twisted index}\label{sec:flat+index}

In this section, we provide an index theoretic  interpretation of  the characteristic number under consideration, and  prove a simple vanishing result.

This section is organized as follows. In Section \ref{s2.1}, we construct certain vector bundles associated to  the given flat vector bundle. In Section \ref{s2.2}, we construct a twisted Dirac operator of which the index is equal to the given characteristic number. A simple vanishing result is also established.


\subsection{Flat vector bundles and the   signature splitting}\label{s2.1}


Let $(F,\nabla^F)$ be an oriented real flat vector bundle over a closed manifold $M$.
Let  $g^F$  be a Euclidean metric on $F$.
As in \cite[(4.1)]{MR1185803}, set
\begin{equation}\label{omega}
\omega\left(F,g^F\right)=\left(g^F\right)^{-1}\nabla^F g^F.
\end{equation}
Then $F$ carries a canonical Euclidean connection (see \cite[(4.3)]{MR1185803})
\begin{align}
\nabla^{F,e}=\nabla^F+\frac{1}{2}\,\omega\left(F,g^F\right).
\end{align}
By \cite[Proposition 4.3]{MR1185803}, its curvature is given by
\begin{equation}\label{curvat}
\left(\nabla^{F, e}\right)^2=-\frac{1}{4}\left(\omega\left(F,g^F\right)\right)^2,
\end{equation}
which is usually nonzero.

Let $\Lambda(F^*)$ be the (complex) exterior algebra bundle of $F$. Then $\Lambda(F^*)$ carries
a Hermitian metric canonically induced from $g^F$ and a Hermitian connection $\nabla^{\Lambda(F^*), u}$
induced from $\nabla^{F, e}$.

For any $f\in F$, let $f^*\in F^*$ be the metric dual of $f$ with respect to $g^F$. Let $c(f)$, $\widehat c(f)$ be the Clifford actions on $\Lambda(F^*)$ defined by
\begin{equation}\label{cliff}
c(f)=f^*\wedge-i_f,\quad \widehat{c}(f)=f^*\wedge+i_f,
\end{equation}
where $f^*\wedge$ and $i_f$ are the exterior and interior multiplications
by $f^*$ and $f$.

Let $\{f_\mu\}_{\mu=1}^{\rk(F)}$ be an orthonormal basis of $(F,g^{F})$, and
$\{f^\mu\}_{\mu=1}^{\rk(F)}$ be its dual basis.
Then by \cite[(1.26)]{MR2273508}, the curvature $(\nabla^{\Lambda(F^*),u})^2$ of $\nabla^{\Lambda(F^*),u}$ can be computed as follows,
\begin{equation}\label{curvat2}
\left(\nabla^{\Lambda\left(F^*\right),u}\right)^2
=\sum_{\mu,\,\nu=1}^{\rk(F)}\left\langle\left (\nabla^{F,e}\right)^2f_\mu,f_\nu\right\rangle
f^\nu\wedge i_{f_\mu}.
\end{equation}
From \eqref{curvat}-\eqref{curvat2}, we deduce that
\begin{align}\label{curvat3}
\left(\nabla^{\Lambda\left(F^*\right),u}\right)^2
 =-\frac{1}{16}\sum_{\mu,\,\nu=1}^{\rk(F)}
\left\langle f_\mu, \omega\left(F,g^F\right) ^2f_\nu\right\rangle
\left(\widehat{c}\left(f_\mu\right)\widehat{c}\left(f_\nu\right)-c\left(f_\mu\right)c\left(f_\nu\right)\right).
\end{align}

 On the other hand, when $\rk(F)$ is even, set
\begin{equation}\label{tauF}
\tau\left(F,g^F\right)=\left(\frac{1}{\sqrt{-1}}\right)^{\frac{\rk (F)}{2}}c\left(f_1\right)\,\cdots\,c\left(f_{\rk(F)}\right).
\end{equation}
Then $(\tau(F,g^F))^2={\rm Id}\big|_{\Lambda(F^*)}$, and $\tau(F,g^F)$ induces a $\Z_2$-graded splitting
\begin{equation}\label{Z2F}
\Lambda\left(F^*\right)=\Lambda_{+}\left(F^*\right)\oplus \Lambda_{-}\left(F^*\right),
\end{equation}
where
\begin{equation}\label{lambda}
\Lambda_{\pm}\left(F^*\right)=\left\{\alpha\in \Lambda\left(F^*\right)\,\big|\, \tau\left(F,g^F\right)\alpha=\pm\alpha\right\}.
\end{equation}
Moreover, $c(f)$ exchanges $\Lambda_{\pm}(F^*)$ for any $f\in F$.

\subsection{Dirac operators and an easy vanishing result}\label{s2.2}


We
 assume   that $M$ is  spin and  of  even dimension, and that
  $\rk(F)$ is even.
Let $g^{TM}$ be a Riemannian metric on $M$, and
$\nabla^{TM}$ be the associated Levi-Civita connection.
Let $S(TM)$ be the Hermitian bundle
of spinors associated to $(TM,g^{TM})$ with the $\mathbb{Z}_2$-graded splitting
\begin{equation}\label{Z2S}
S(TM)=S_{+}(TM)\oplus S_{-}(TM).
\end{equation}
Then $\nabla^{TM}$ induces naturally a Hermitian connection $\nabla^{S(TM)}$ on $S(TM)$ preserving the $\mathbb{Z}_2$-grading.

From the $\Z_2$-graded vector bundles in \eqref{Z2F} and \eqref{Z2S}, we form the following $\Z_2$-graded tensor product (see \cite[p. 11]{MR1031992})
\begin{equation}\label{Z2SF}
S(TM)\widehat{\otimes}\Lambda\left(F^*\right)=\left(S(TM)\widehat{\otimes}\Lambda\left(F^*\right)\right)
_+\oplus\left(S(TM)\widehat{\otimes}\Lambda\left(F^*\right)\right)_-,
\end{equation}
which carries the $\Z_2$-graded tensor product connection
\begin{equation}
\nabla^{u}=\nabla^{S(TM)}\otimes {\rm Id}|_{ \Lambda(F^*)} + {\rm Id}_{S(TM) } \otimes\nabla^{\Lambda(F^*),u}.
\end{equation}

For $e\in TM$, we denote by $c(e)$ the Clifford action of $e$ on $S(TM)$. Then it extends to an action
$c(e)\otimes {\rm Id}\big|_{\Lambda(F^*)}$ on $S(TM)\widehat{\otimes}\Lambda(F^*)$,
which we still denote by $c(e)$.

Take an oriented orthonormal basis $\{e_i\}_{i=1}^{\dim M}$ of $(TM,g^{TM})$.
Let
\begin{align}\label{Mdirac}
D^M=\sum_{i=1}^{\dim M}c\left(e_i\right)\nabla^{u}_{e_i}:
 \Gamma\left(M,S(TM)\widehat{\otimes}\Lambda\left(F^*\right)\right)\longrightarrow
\Gamma\left(M,S(TM)\widehat{\otimes}\Lambda\left(F^*\right)\right)
\end{align}
be the corresponding  twisted Dirac operator, and denote
\begin{equation}\label{Mdiracpm}
D^M_\pm=D^M\big|_{\Gamma\left(M,(S(TM)\widehat{\otimes}\Lambda(F^*))_\pm\right)}\ .
\end{equation}

Since $(F,\nabla^F)$ is flat, by   the Atiyah-Singer index theorem \cite{MR0236950}, we get
\begin{align}\label{asindex}
\ind\left(D^M_+\right)&=\left\langle \widehat A(TM)\ch
\left(\Lambda_+\left(F^*\right)-\Lambda_-\left(F^*\right)\right),[M]\right\rangle
\notag\\
&=2^{\frac{\rk(F)}{2}}\left\langle \widehat{A}(TM)\,e(F),[M]\right\rangle.
\end{align}


 Let
$\Delta^{u}=\sum_{i=1}^{\dim M}
(\nabla^{u}_{e_i}\nabla^{u}_{e_i}-\nabla^{u}_{\nabla^{TM}_{e_i}e_i}
)
$  
be the Bochner Laplacian. Let $k^{TM}$ denote the scalar curvature of $(TM,g^{TM})$. We have the following  standard Lichnerowicz formula \cite{MR0156292},
\begin{align}
\label{D2}
\left(D^M\right)^2=-\Delta^{u}+\frac{k^{TM}}{4}+
\frac{1}{2}\sum_{i,\,j=1}^{\dim M}c\left(e_i\right)c\left(e_j\right)\left(\nabla^{\Lambda\left(F^*\right),u}\right)^2 \left(e_i,e_j\right).
\end{align}

From (\ref{asindex}) and (\ref{D2}), one obtains the following easy vanishing result.

\begin{prop}\label{t2.1}
If there holds over $M$ that
\begin{align}\label{001}
\frac{k^{TM}}{4}+
\frac{1}{2}\sum_{i,\,j=1}^{\dim M}c\left(e_i\right)c\left(e_j\right)\left(\nabla^{\Lambda\left(F^*\right),u}\right)^2 \left(e_i,e_j\right)>0,
\end{align}
then one has $\langle \widehat{A}(TM)\,e(F),[M] \rangle=0$.
\end{prop}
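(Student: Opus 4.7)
The plan is to apply the Lichnerowicz formula (\ref{D2}) together with the index identity (\ref{asindex}): positivity of the zeroth-order curvature endomorphism in (\ref{D2}) will force $\ker D^M=\{0\}$, which makes $\ind(D^M_+)=0$, and (\ref{asindex}) will then deliver the vanishing of $\langle \widehat A(TM)\,e(F),[M]\rangle$.

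First I would observe that the connection $\nabla^u$ is Hermitian, being the $\Z_2$-graded tensor product of the Hermitian connections $\nabla^{S(TM)}$ and $\nabla^{\Lambda(F^*),u}$. Consequently the twisted Dirac operator $D^M$ defined in (\ref{Mdirac}) is formally self-adjoint on $L^2(M,S(TM)\widehat\otimes \Lambda(F^*))$. This is the precise place where it matters that $\nabla^{\Lambda(F^*),u}$ is built from the Euclidean connection $\nabla^{F,e}$ rather than the original flat connection $\nabla^F$; the price paid is the (generally nonzero) curvature computed in (\ref{curvat3}).

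Next, for any smooth section $s$, I would pair (\ref{D2}) with $s$ and integrate by parts to get
\begin{align*}
\bigl\langle (D^M)^2 s,s\bigr\rangle_{L^2}
=\bigl\|\nabla^u s\bigr\|_{L^2}^2
+\int_M\Bigl\langle\Bigl(\tfrac{k^{TM}}{4}+\tfrac{1}{2}\sum_{i,j}c(e_i)c(e_j)\bigl(\nabla^{\Lambda(F^*),u}\bigr)^2(e_i,e_j)\Bigr)s,s\Bigr\rangle\,dv_M.
\end{align*}
Hypothesis (\ref{001}) says that the pointwise endomorphism inside the integral is positive definite, so the right-hand side is strictly positive for any $s\not\equiv 0$. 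Hence $\ker D^M=\{0\}$, and since $D^M$ exchanges the $\Z_2$-grading, we conclude $\ker D^M_+=\ker D^M_-=\{0\}$, so $\ind(D^M_+)=0$. Substituting into (\ref{asindex}) yields
\[
0=\ind(D^M_+)=2^{\rk(F)/2}\bigl\langle \widehat A(TM)\,e(F),[M]\bigr\rangle,
\]
and dividing by $2^{\rk(F)/2}\neq 0$ finishes the argument.

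There is essentially no obstacle in this proof: it is a textbook Bochner vanishing once the self-adjoint Dirac operator has been set up correctly. The genuine difficulty, which the rest of the paper must confront, is that the curvature term in (\ref{001}) can in principle be arbitrarily negative, so the hypothesis of Proposition \ref{t2.1} cannot be verified from positivity of $k^{TM}$ alone. Overcoming this obstruction is what necessitates the passage to the Connes fibration and the deformed sub-Dirac operators in Section \ref{sec:connes}.
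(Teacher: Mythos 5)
Your argument is correct and is exactly the proof the paper has in mind: the paper states Proposition \ref{t2.1} as an immediate consequence of \eqref{asindex} and \eqref{D2}, i.e.\ the standard Bochner--Lichnerowicz vanishing of $\ker D^M$ under hypothesis \eqref{001}, giving $\ind(D^M_+)=0$ and hence $\langle \widehat A(TM)\,e(F),[M]\rangle=0$ after dividing by $2^{\rk(F)/2}$. Your closing remark about why \eqref{001} cannot be verified directly, and the need for the Connes fibration, also matches the paper's motivation for the subsequent sections.
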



In the  next two sections, we will   eliminate the summation term in (\ref{001}).

\section{Connes fibration and sub-Dirac operators}\label{sec:connes}

In this section, we reduce the proof of Theorem \ref{t0.1} to an estimating result of certain Atiyah-Patodi-Singer elliptic boundary valued problems for deformed sub-Dirac  operators constructed  on the Connes fibration associated to a given flat vector bundle.

This section is organized as follows. In Section \ref{s3.1}, we present the  construction of  the Connes fibration associated to a flat vector bundle as well as certain basic properties of the Connes fibration. In Section \ref{sec:subdirac}, we construct the needed sub-Dirac operator on the Connes fibration. In Section \ref{s3.3}, we study the induced sub-Dirac operator on the boundary. In Section \ref{s3.4}, we introduce certain deformations of the sub-Dirac operators and reduce Theorem \ref{t0.1} to an estimating result.

\subsection{Connes fibration associated with a flat bundle}\label{s3.1}

Let $(M,g^{TM})$ be a closed Riemannian manifold,
and $(F,\nabla^F)$ an oriented real flat vector bundle over $M$.
Following \cite[\S5]{MR866491} (cf. \cite[\S 2.1]{Zhangfoliation}), let $\pi:\mM\rightarrow M$ be the Connes
fibration over $M$ such that for any $x\in M$, $\mM_{x}$ is the space of
Euclidean metrics on the vector space $F_x$.
Let $\mE^{\perp}=T^V\mM$ denote the vertical tangent bundle of this fibration.
Then it carries a naturally induced metric $g^{\mE^{\perp}}$ such that each $\mM_{x}=\pi^{-1}(x)$, $x\in M$, is of nonpositive sectional curvature. In particular, any two points $p_1,\,p_2\in\mM_x$ can be joined by a
unique geodesic in $\mM_x$ (cf. \cite{MR1834454}). Let $d^{\mM_x}(p_1,p_2)$ denote the length of this geodesic .

By using the flat connection $\nabla^F$, one lifts $TM$ to an integrable  horizontal subbundle $\mE=T^H\mM$ of $T\mM$ so that we have a canonical splitting
$T\mM=\mE\oplus\mE^{\perp}.$

Set $\mF=\pi^*F$. Then there is a canonical Euclidean metric $g^\mF$ on $\mF$ defined as follows: by construction,   any $p\in \mM$ determines a Euclidean metric on $F_{\pi(p)}$, which in turn determines a metric on $\mF_p\simeq \pi^*F_{\pi(p)}$.

\begin{lem}\label{t3.1} 1). The Bott connection on $(\mE^\perp,g^{\mE^\perp})$ is leafwise Euclidean.

2). There exists a canonical Euclidean connection on $(\mF,g^\mF)$ such that for any $X,\, Y\in \Gamma(\mM,\mE)$, one has
\begin{align}\label{mF_horizonflat}
\left(\nabla^{\mF}\right)^2(X,Y)=0.
\end{align}
\end{lem}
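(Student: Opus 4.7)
The plan is to exploit a single geometric fact: the horizontal distribution $\mE$ is, by construction, the infinitesimal version of $\nabla^F$-parallel transport of Euclidean metrics on the fibers of $F$. Consequently the local flow of any horizontal vector field on $\mM$ covers a flow on $M$ and identifies nearby fibers $\mM_x, \mM_{x'}$ as $GL$-equivariantly isomorphic, and hence isometric, symmetric spaces.

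For part 1), I would begin by recalling that since $\mE$ is integrable, it defines a foliation of $\mM$, and the Bott partial connection on the transverse bundle $\mE^{\perp}$ is given in leaf (i.e. horizontal) directions by
\[ \nabla^{\rm Bott}_X U \;=\; P_{\mE^{\perp}}[X, U], \qquad X \in \Gamma(\mM, \mE),\ U \in \Gamma(\mM, \mE^{\perp}), \]
with $P_{\mE^{\perp}} : T\mM \to \mE^{\perp}$ the projection. The local flow $\phi^X_t$ of a horizontal $X$ sends $\mM_x$ onto $\mM_{\phi^{\pi_* X}_t(x)}$ via the map induced from $\nabla^F$-parallel transport on $F_x$, and as a $GL$-equivariant isomorphism between symmetric spaces of Euclidean metrics this map is an isometry with respect to $g^{\mE^{\perp}}$. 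Consequently $\phi^X_t$ preserves the vertical foliation, so that $[X, U]$ is already vertical and $\nabla^{\rm Bott}_X U = [X, U]$, and moreover $\mathcal{L}_X g^{\mE^{\perp}} = 0$ on vertical pairs. The leafwise Euclidean property
\[ X \cdot g^{\mE^{\perp}}(U, V) \;=\; g^{\mE^{\perp}}\!\left(\nabla^{\rm Bott}_X U, V\right) + g^{\mE^{\perp}}\!\left(U, \nabla^{\rm Bott}_X V\right) \]
then follows by a one-line computation.

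For part 2), I would copy the construction (2.1)--(2.3) on the total space $\mM$: starting from the pulled-back flat connection $\pi^*\nabla^F$ on $\mF = \pi^*F$, set
\[ \omega(\mF, g^\mF) \;:=\; (g^\mF)^{-1}(\pi^*\nabla^F) g^\mF, \qquad \nabla^{\mF} \;:=\; \pi^*\nabla^F + \tfrac{1}{2} \omega(\mF, g^\mF). \]
The same algebra as in the proof of (2.3), i.e.\ \cite[Proposition 4.3]{MR1185803}, shows that $\nabla^{\mF}$ is Euclidean on $(\mF, g^\mF)$ with curvature
\[ (\nabla^{\mF})^2 \;=\; -\tfrac{1}{4}\,\omega(\mF, g^\mF)^2 \]
as $\End(\mF)$-valued 2-forms. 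It therefore suffices to check that $\omega(\mF, g^\mF)$ restricts to zero on $\mE$. But along the horizontal integral curve $\tilde\gamma$ of any $X \in \Gamma(\mM, \mE)$, the point $\tilde\gamma(t)$ is by the very definition of $\mE$ the $\nabla^F$-parallel transport of $\tilde\gamma(0)$, which says exactly that $g^\mF$ is $\pi^*\nabla^F$-parallel in horizontal directions. Hence $\omega(\mF, g^\mF)(X) = 0$ for every $X \in \mE$, and the curvature formula gives $(\nabla^{\mF})^2(X, Y) = 0$ for all horizontal $X, Y$.

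The real content of the lemma is the identification \emph{horizontal $=$ $\nabla^F$-parallel on metrics}; the main (conceptual rather than analytic) point is to formalize this cleanly from the construction of $\mM$, after which both parts reduce to the short verifications sketched above.
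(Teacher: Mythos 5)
Your proof is correct, but it takes a genuinely different route from the paper's. The paper does not argue directly on $\mM$ at all: it regards the total space $\widehat F$ of $F$ as a foliated manifold, foliated by the integrable horizontal subbundle $T^H\widehat F$ determined by $\nabla^F$, forms the Connes fibration $\widehat\pi:\widehat\mF\rightarrow\widehat F$ of that foliation, identifies $\mM\simeq\widehat\pi^{-1}(M)$ as in \eqref{3.1}, and then obtains both assertions by restricting \cite[Lemma~1.5]{Zhangfoliation} to $\mM$, leaving the verification to the reader. You instead prove the two statements intrinsically: part 1) from the observation that the leafwise holonomy of $\mE$ is induced by $\nabla^F$-parallel transport, which acts on each fibre of Euclidean metrics as an isometry of the natural $GL$-invariant symmetric-space metric $g^{\mE^\perp}$, and part 2) by exhibiting the connection $\nabla^{\mF}=\pi^*\nabla^F+\tfrac12(g^\mF)^{-1}(\pi^*\nabla^F)g^\mF$ and noting that $\omega(\mF,g^\mF)$ annihilates $\mE$ because $g^\mF$ is horizontally parallel for $\pi^*\nabla^F$, so the curvature formula \eqref{curvat} (applied on $\mM$) kills horizontal pairs. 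This is precisely the geometric mechanism underlying the quoted lemma of \cite{Zhangfoliation}, so the content is the same; what your argument buys is a self-contained proof with an explicit formula for the canonical Euclidean connection (which indeed has all the properties used later, namely metric compatibility and \eqref{mF_horizonflat}), while the paper's reduction is shorter and keeps the setup aligned with the foliation framework it reuses elsewhere (e.g.\ Lemma \ref{lem:esti} is likewise obtained by restriction through \eqref{3.1}). One small point to tighten in part 1): an arbitrary $X\in\Gamma(\mM,\mE)$ need not be projectable, so its flow need not cover a flow on $M$; since both the Bott connection and the defect
\begin{equation*}
X\left\langle U,V\right\rangle-\left\langle \nabla^{\mathrm{Bott}}_XU,V\right\rangle-\left\langle U,\nabla^{\mathrm{Bott}}_XV\right\rangle
\end{equation*}
are $C^\infty(\mM)$-linear in $X$, you should first reduce to the case where $X$ is the horizontal lift of a vector field on $M$, for which your flow argument (and the verticality of $[X,U]$) is valid.
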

\begin{proof}
Let $\widehat F$ denote the total space of the flat vector bundle $\pi_F: F\rightarrow M$. Then    $TM$ lifts to an integrable subbundle $T^H\widehat F$ of $T\widehat F$ such that $T^H\widehat F|_{M}=TM$, and that $(T\widehat F/T^H\widehat F)|_M\simeq F$.

Following  \cite[\S5]{MR866491} and \cite[\S 2.1]{Zhangfoliation}, let $\widehat \pi :\widehat \mF\rightarrow \widehat F$ be the Connes fibration such that for any $x\in\widehat F$, $\widehat \mF_x=\widehat\pi^{-1}(x)$ is the space of Euclidean metrics on $T\widehat F_x/T^H\widehat F_x$.
Then one verifies that
\begin{align}\label{3.1}
\mM\simeq \widehat\pi^{-1}(M).
\end{align}

By restricting \cite[Lemma 1.5]{Zhangfoliation} from $\widehat \mF$ to $\mM$, one gets Lemma \ref{t3.1}.  We leave the details to the interested reader.
\end{proof}

 {

Let $g^{\mE}=\pi^*g^{TM}$ be the pullback Euclidean metric on $\mE$. Let $g^{T\mM}$ be the Riemannian metric on $\mM$ given by the orthogonal splitting
\begin{align}\label{TmM_metric}
T\mM=\mE\oplus \mE^\perp,\ \ \ g^{T\mM}=g^{\mE}\oplus g^{\mE^\perp}.
\end{align}
Let $p$ and $p^\perp$ be the orthogonal projections from $T\mM$ to $\mE$
and $\mE^\perp$.
Let $\nabla^{T\mM}$ be the Levi-Civita connection of $g^{T\mM}$. Set
\begin{align}\label{}
\nabla^{\mE}=p\nabla^{T\mM}p,
\quad
\nabla^{\mE^\perp}=p^\perp\nabla^{T\mM}p^\perp.
\end{align}
Then $\nabla^{\mE^\perp}$ does not depend on $g^\mE$. Moreover, by Lemma \ref{t3.1},  one has
\begin{align}\label{mEperp_horizonflat}
\left(\nabla^{\mE^\perp}\right)^2(X,Y)=0,\ \text{for any}\ X, Y\in \Gamma(\mM,\mE).
\end{align}

}

Take a Euclidean metric $g^F$ on $F$, which amounts to taking an embedded
section $\jmath:M\hookrightarrow \mM$ of $M$ into the Connes
fibration  $\pi:\mM\rightarrow M$.
Then we have the canonical inclusion  $\jmath(M)\subset \mM$.

For any $p\in \mM\setminus \jmath(M)$,  we connect
$p$ and $\jmath(\pi(p))\in \jmath(M)$ by the unique geodesic in $\mM_{\pi(p)}$.
Let $\sigma(p)\in\mE^\perp|_p$ denote the unit vector tangent to this geodesic.
Set $\rho(p)=d^{\mM_{\pi(p)}}(p,\jmath(\pi(p)))$.

By (\ref{3.1}) and \cite[Lemma~2.1]{Zhangfoliation}, we have the following estimating result.

\begin{lem}\label{lem:esti}
There exists  $C>0$, which depends only on the
embedding $\jmath:M\hookrightarrow \mM$, such that for any $X\in\Gamma(\mM,\mE)$
with $|X|\leqslant 1$, the following pointwise 
inequality holds on $\mM$,
\begin{equation}
\left| \nabla^{\mE^\perp}_X(\rho\sigma)\right|\leqslant C.
\end{equation}
\end{lem}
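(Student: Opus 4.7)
The plan is to deduce this pointwise estimate directly from the analogous bound on the ambient Connes fibration $\widehat\pi:\widehat\mF\to\widehat F$ via the identification $\mM\simeq \widehat\pi^{-1}(M)$ in (\ref{3.1}), as the authors indicate. Under this identification, each of the geometric ingredients that appears in the lemma is the restriction to $\mM$ of a corresponding object on $\widehat\mF$: the vertical bundle $\mE^\perp$ is the restriction of $T^V\widehat\mF$; the horizontal bundle $\mE$ consists of those horizontal vectors in $T^H\widehat\mF|_{\mM}$ that project to $TM\subset T\widehat F$; the Bott-type connection $\nabla^{\mE^\perp}$ is the restriction of the analogous connection on $T^V\widehat\mF$; and the section $\jmath$, together with the distance function $\rho$ and the unit radial vector $\sigma$ that it determines, is the restriction of a section of $\widehat\pi$ over $\widehat F$.

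First I would spell out these identifications carefully. The crucial point is the compatibility of the horizontal distributions: both are defined by parallel transport of Euclidean metrics along the same flat connection $\nabla^F$, so horizontal vector fields on $\mM$ that project to $TM$ arise precisely as restrictions of horizontal vector fields on $\widehat\mF$. Since the fibers of $\pi$ and of $\widehat\pi$ coincide over points of $M$, the fiber-wise geodesics used to define $\rho$ and $\sigma$ are unchanged, and $\rho\sigma$ on $\mM$ is literally the restriction of its counterpart on $\widehat\mF$.

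Given any $X\in\Gamma(\mM,\mE)$ with $|X|\leqslant 1$, I would extend it locally (for instance via a tubular neighborhood of $M$ in $\widehat F$) to a horizontal vector field $\widehat X$ on an open subset of $\widehat\mF$ satisfying $|\widehat X|\leqslant 1$ and $\widehat X|_{\mM}=X$. The compatibility of the Bott connections then yields
\begin{equation*}
\nabla^{\mE^\perp}_{X}(\rho\sigma)=\nabla^{T^V\widehat\mF}_{\widehat X}(\rho\sigma)\big|_{\mM},
\end{equation*}
and applying \cite[Lemma~2.1]{Zhangfoliation} on $\widehat\mF$ produces the desired uniform bound, whose constant depends only on the chosen section, that is, only on the embedding $\jmath$.

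The only real obstacle is bookkeeping---tracking the horizontal distribution and the vertical Bott connection through the construction of $\widehat\mF$ over $\widehat F$ described in \cite[\S2.1]{Zhangfoliation}. This is essentially definitional, and once the compatibility is established the lemma is immediate from the ambient estimate.
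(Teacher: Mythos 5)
Your proposal is correct and follows essentially the same route as the paper: the authors justify Lemma \ref{lem:esti} precisely by the identification $\mM\simeq\widehat\pi^{-1}(M)$ in (\ref{3.1}) together with \cite[Lemma~2.1]{Zhangfoliation}, i.e.\ by restricting the ambient estimate on the Connes fibration $\widehat\mF$ over $\widehat F$ to $\mM$, exactly as you do. Your write-up merely makes explicit the compatibility bookkeeping (horizontal distributions, Bott connection, fiberwise geodesics, and the extension of $X$) that the paper leaves to the reader.
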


\subsection{Sub-Dirac operators on the Connes fibration}\label{sec:subdirac}

Without loss of generality, we can and we will assume that both $\dim M$ and ${\rm rk}(F)$ are divisible  by $4$.
Then both  $\dim\mM$ and ${\rm rk}(\mE^\perp)$ are   even. By passing to a double covering if necessary, we also assume that $\mE^\perp$ is oriented.

We   assume from now on that $M$ is spin, then  $\mE=\pi^{*}(TM)$ is spin and carries a
naturally induced spin structure. Let
\begin{equation}\label{Z2SE}
S(\mE)=S_{+}(\mE)\oplus S_{-}(\mE)
\end{equation}
be the $\mathbb{Z}_2$-graded Hermitian bundle
of spinors associated  to $(\mE,g^\mE)$.
Then $\nabla^{\mE}$ induces naturally a Hermitian connection
$\nabla^{S(\mE)}$ on $S(\mE)$ preserving the $\mathbb{Z}_2$-grading.
For $e\in\mE$, let $c(e)$ denote the Clifford action of $e$ on $S(\mE)$, which exchanges $S_\pm(\mE)$.

Let $\Lambda(\mE^{\perp,*})$ be the (complex) exterior algebra bundle
of $\mE^\perp$ with the $\mathbb{Z}_2$-graded splitting
\begin{equation}\label{Z2LamEperp}
\Lambda\left(\mE^{\perp,*}\right)=\Lambda^{\rm even}\left(\mE^{\perp,*}\right)\oplus \Lambda^{\rm odd}\left(\mE^{\perp,*}\right).
\end{equation}
Then $\Lambda(\mE^{\perp,*})$ carries
a Hermitian metric canonically induced from $g^{\mE^{\perp}}$ and a Hermitian connection $\nabla^{\Lambda(\mE^{\perp,*})}$
induced from $\nabla^{\mE^\perp}$.
For $h\in\mE^{\perp}$, let $c(h)$ and $\widehat{c}(h)$ be
the actions of $h$ on $\Lambda(\mE^{\perp,*})$ defined as in \eqref{cliff}.


Let $\Lambda(\mF^*)$ be the (complex) exterior algebra bundle of $\mF$.
The Euclidean connection $\nabla^{\mF}$ on $\mF$
naturally induces a connection $\nabla^{\Lambda(\mF^*)}$
on $\Lambda(\mF^*)$, which
preserves the metric on $\Lambda(\mF^*)$ induced by $g^{\mF}$.
We denote by $c(\cdot)$ and $\widehat{c}(\cdot)$ the actions of
$\mF$ on $\Lambda(\mF^*)$ defined as in \eqref{cliff}. Let
\begin{equation}\label{Z2LamF}
\Lambda(\mF^*)=\Lambda_{+}(\mF^*)\oplus \Lambda_{-}(\mF^*)
\end{equation}
be the $\mathbb{Z}_2$-graded splitting of $\Lambda(\mF^*)$ determined as in \eqref{tauF}-\eqref{lambda}.

Using the $\Z_2$-graded vector bundles in \eqref{Z2SE}-\eqref{Z2LamF}, we form the following $\Z_2$-graded tensor product (see \cite[p. 11]{MR1031992})
\begin{align}\label{totalZ2grad}
&S(\mE)\widehat{\otimes}\Lambda\left(\mE^{\perp,*}\right)\widehat{\otimes}\Lambda\left(\mF^*\right)
\notag\\
&=\left(S(\mE)\widehat{\otimes}\Lambda\left(\mE^{\perp,*}\right)\widehat{\otimes}\Lambda\left(\mF^*\right)
\right)_+
\oplus\left(S(\mE)\widehat{\otimes}\Lambda\left(\mE^{\perp,*}\right)\widehat{\otimes}\Lambda
\left(\mF^*\right)\right)_-\ ,
\end{align}
which carries the tensor product connection
$\nabla^{S(\mE)\widehat{\otimes}\Lambda(\mE^{\perp,*})\widehat{\otimes}\Lambda(\mF^*)}$ induced by
$\nabla^{S(\mE)}$, $\nabla^{\Lambda(\mE^{\perp,*})}$ and $\nabla^{\Lambda(\mF^*)}$.

The action of $\mE$ on $S(\mE)$ and that of
$\mE^\perp$ on $\Lambda(\mE^{\perp,*})$
as well as that of $\mF$ on $\Lambda(\mF^*)$ extend
to actions on $S(\mE)\widehat{\otimes}\Lambda(\mE^{\perp,*})\widehat{\otimes}\Lambda(\mF^*)$
in an obvious way. We still use the same notation to denote these extended actions.

Let
$
\{h_i\}_{i=1}^{\dim M}$
 (resp. $
\{h_j\}_{j=\dim M+1}^{\dim\mM}
$)  
be an oriented orthonormal basis of $(\mE,g^{\mE})$ (resp. $(\mE^\perp,g^{\mE^\perp})$).
Let $\mS$ be the $\End(T\mM)$-valued one-form on $\mM$ defined by
\begin{equation}
\mS(\cdot)=\nabla_{\cdot}^{T\mM}-\left(\nabla_{\cdot}^{\mE}\oplus \nabla_{\cdot}^{\mE^\perp}\right).
\end{equation}
Following \cite{MR1850748}  and \cite[(1.60)]{Zhangfoliation}, we define a Hermitian connection
\begin{equation}\label{tensorconnection}
\widehat{\nabla}_{\cdot}=
\nabla_{\cdot}^{S(\mE)\widehat{\otimes}\Lambda(\mE^{\perp,*})\widehat{\otimes}\Lambda(\mF^*)}
+\frac{1}{4}\sum_{i,\,j=1}^{\dim\mM}\left\langle \mS(\cdot)h_i,h_j\right\rangle c\left(h_i\right)c\left(h_j\right)
\end{equation}
on $S(\mE)\widehat{\otimes}\Lambda(\mE^{\perp,*})\widehat{\otimes}\Lambda(\mF^*)$.

As in \cite{MR1850748} and \cite[(1.61)]{Zhangfoliation}, let
\begin{align}\label{subdirac}
D^{\mM}&=\sum_{i=1}^{\dim\mM}c\left(h_i\right)\widehat{\nabla}_{h_i}:
\Gamma\left(\mM,
S(\mE)\widehat{\otimes}\Lambda\left(\mE^{\perp,*}\right)\widehat{\otimes}\Lambda\left(\mF^*\right)\right)
\notag\\
&\hspace{10em}
\longrightarrow\Gamma\left(\mM,
S(\mE)\widehat{\otimes}\Lambda\left(\mE^{\perp,*}\right)\widehat{\otimes}\Lambda\left(\mF^*\right)\right)
\end{align}
be the sub-Dirac
operator with respect to the spinor bundle $S(\mE)$.
Then $D^{\mM}$ is a formally self-adjoint first order elliptic differential operator,
which exchanges the $\mathbb{Z}_2$-grading in \eqref{totalZ2grad}.
Moreover, as indicated  in  \cite[Remark~1.8]{Zhangfoliation}, $D^\mM$ can locally be viewed as a twisted Dirac operator.

Let
$\Delta^{\mM}=\sum_{i=1}^{\dim\mM} (\widehat{\nabla}_{h_i}
\widehat{\nabla}_{h_i}-\widehat{\nabla}_{\nabla^{T\mM}_{h_i}h_i})$ be the Bochner Laplacian,
and $k^{T\mM}$ be the scalar curvature of $(T\mM,g^{T\mM})$.
 We have the following   Lichnerowicz formula,
\begin{multline}\label{Lichnerowicz}
\left(D^{\mM}\right)^2
=-\Delta^{\mM}+\frac{k^{T\mM}}{4}
\\
+\frac{1}{8}\sum_{k,\,l=\dim M+1}^{\dim\mM}\sum_{i,\,j=1}^{\dim \mM}\left\langle h_k,\left(\nabla^{\mE^{\perp}}\right)^2\left(h_i,h_j\right)h_l\right\rangle
\cdot c\left(h_i\right)c\left(h_j\right)\widehat{c}\left(h_k\right)\widehat{c}\left(h_l\right)
\\
+\frac{1}{8}\sum_{\mu,\,\nu=1}^{\rk(\mF)}\sum_{i,\,j=1}^{\dim\mM}\left\langle f_\mu,
\left(\nabla^{\mF}\right)^2\left(h_i,h_j\right)f_\nu \right\rangle
\cdot c\left(h_i\right)c\left(h_j\right)\left(\widehat{c}\left(f_\mu\right)\widehat{c}\left(f_\nu\right)-c\left(f_\mu\right)c(f_\nu)\right),
\end{multline}
where $\{f_\mu\}_{\mu=1}^{\rk \mF}$ is an orthonormal basis of $(\mF,g^{\mF})$.
\subsection{Induced sub-Dirac operators  on the boundary}\label{s3.3}

For any $R>0$, denote
$
\mM_R=\{p\in\mM\,|\,\rho(p)\leqslant R\}.
$
Then $\mM_R$ is a compact smooth   manifold with boundary $\partial\mM_R$.

We follow the convention as in   \cite{MR1252030}.

Let $\epsilon_R>0$ be a sufficiently small positive number.
We use the inward geodesic flow to identify a neighborhood of $\partial\mM_R$ with the collar
$\partial\mM_R\times [0,\epsilon_R)$. Let $e_{\dim\mM}$ be the inward unit normal
vector field to $T\partial\mM_R$ so that $e_1,\cdots,e_{\dim\mM}$ is an oriented orthonormal basis of
$T\mM|_{\partial\mM_R}$. Then using parallel transport with respect to $\nabla^{T\mM}$ along
the unit speed geodesics perpendicular to $\partial\mM_R$,  $e_1,\cdots,e_{\dim\mM}$
forms an oriented orthonormal basis of $T\mM$ over $\partial\mM_R\times [0,\epsilon_R)$.

For   $1\leqslant i,\,j\leqslant \dim \mM-1$, let
$\pi_{ij}=\langle\nabla^{T\mM}_{e_i}e_j,e_{\dim\mM}
\rangle|_{\partial\mM_R}$
be the second fundamental form of the isometric
embedding $i_{\partial\mM_R}:\partial \mM_R\hookrightarrow \mM_R$.
Let
\begin{align*}
&D^{\partial \mM_R}:
 \Gamma\left(\left.\partial\mM_R,\left(S(\mE)\widehat{\otimes}\Lambda\left(\mE^{\perp,*}\right)
\widehat{\otimes}\Lambda\left(\mF^{*}\right)\right)\right|_{\partial\mM_R}\right)
\\
&\hspace{6em}
\longrightarrow
 \Gamma\left(\left.\partial\mM_R,\left(S(\mE)\widehat{\otimes}\Lambda\left(\mE^{\perp,*}\right)
\widehat{\otimes}\Lambda\left(\mF^{*}\right)\right)\right|_{\partial\mM_R}\right)
\end{align*}
be the differential operator on ${\partial\mM_R}$ defined by
\begin{align}\label{bd_subdirac}
D^{\partial \mM_R}
=-\sum_{i=1}^{\dim\mM-1}c\left(e_{\dim\mM}\right)c\left(e_i\right)\widehat{\nabla}_{e_i}
+\frac{1}{2}\sum_{i=1}^{\dim\mM-1}\pi_{ii}\ .
\end{align}
By \cite[Lemmas {2.1 and 2.2}]{MR1252030}, $D^{\partial \mM_R}$ is a formally self-adjoint
first order elliptic differential operator intrinsically defined on $\partial\mM_R$.
Also, it preserves the $\Z_2$-grading of
$(S(\mE)\widehat{\otimes}\Lambda(\mE^{\perp,*})
\widehat{\otimes}\Lambda(\mF^{*}))|_{\partial\mM_R}$ induced by \eqref{totalZ2grad}.

\subsection{Deformations of sub-Dirac operators and their indices}\label{s3.4}

Let $\psi: [0,1]\rightarrow [0,1]$ be a smooth function such that
$\psi(t)=0$ for $0 \leqslant t \leqslant\frac{2}{3}$, while
$\psi(t)=1$ for $\frac{3}{4}\leqslant t \leqslant 1$.
For any $0<\varepsilon\leqslant 1$ and $R>0$, let
\begin{align}\label{cut_adia_metric}
{g}_{\varepsilon,R}^{T\mM}
=\left(1-\psi\left(\frac{\rho}{R}\right)\right)g_\varepsilon^{T\mM}
+\psi\left(\frac{\rho}{R}\right)g^{T\mM}
\end{align}
be the Riemannian metric on $\mM_R$, where ${g}_{\varepsilon}^{T\mM}$ is the Riemannian metric on $\mM$ defined by the orthogonal splitting (cf. \cite[(1.11)]{Zhangfoliation})
\begin{align}\label{3.2}
T\mM=\mE\oplus\mE^\perp,\ \ \ {g}_{\varepsilon}^{T\mM}
= \varepsilon^2g^{\mE}\oplus  g^{\mE^{\perp}}.
\end{align}
Then
\begin{equation}\label{cut_metric_inbd}
{g}_{\varepsilon,R}^{T\mM}={g}_{\varepsilon}^{T\mM}\
\text{over}\ \mM_{\frac{2}{3}R}\,, \
\text{while}\
{g}_{\varepsilon,R}^{T\mM}={g}^{T\mM}\
\text{over}\ \mM_R\setminus \mM_{\frac{3}{4}R}\,.
\end{equation}

In what follows, we will use the subscripts (or superscripts)\,
\textquotedblleft\,$\varepsilon$, $R$\,\textquotedblright\,to
decorate the geometric objects with respect to ${g}_{\varepsilon,R}^{T\mM}$.

Let $k_{\varepsilon,R}$ be the scalar curvature of ${g}_{\varepsilon,R}^{T\mM}$.
Then by Lemma \ref{t3.1} and \eqref{cut_metric_inbd}  (cf. \cite[Proposition~1.4]{Zhangfoliation}), one has
over $\mM_{\frac{2}{3}R}$ that
\begin{align}\label{scalarcurv}
k_{\varepsilon,R}=\frac{\pi^*k^{TM}}{\varepsilon^2}+O_R(1),
\end{align}
where $k^{TM}$ is the scalar curvature of $(TM,g^{TM})$, and
by $O_R(\cdot)$ we mean that the estimating constant might depend on $R$.

As in Subsection \ref{sec:subdirac}, we construct the sub-Dirac operator
\begin{align*}\label{}
&D^{\mM_R}_{\varepsilon}:
\Gamma\left( \mM_R,
S_{\varepsilon,R}(\mE)\widehat{\otimes}\Lambda\left(\mE^{\perp,*}\right)
\widehat{\otimes}\Lambda\left(\mF^*\right)\right)
\notag\\
&\hspace{8em}
\longrightarrow
 \Gamma\left( \mM_R,
S_{\varepsilon,R}(\mE)\widehat{\otimes}\Lambda\left(\mE^{\perp,*}\right)
\widehat{\otimes}\Lambda\left(\mF^*\right)\right)
\end{align*}
on $\mM_R$ associated with $g^{T\mM}_{\varepsilon,R}$, which is given by
\begin{align}\label{subdirac_epsilonR}
D^{\mM_R}_{\varepsilon}
=
\sum_{j=1}^{\dim M}c_{\varepsilon,R}(h_j)\widehat{\nabla}^{\varepsilon,R}_{h_j}
+\sum_{j=\dim M+1}^{\dim \mM}c(h_j)\widehat{\nabla}^{\varepsilon,R}_{h_j},
\end{align}
where $\{h_j\}_{j=1}^{\dim M}$ is an orthonormal basis of $(\mE,\left(\varepsilon^2(1- \psi(\frac{\rho}{R}))+ \psi(\frac{\rho}{R})\right)g^\mE$),
and
 $\widehat{\nabla}^{\varepsilon,R}$  denotes the connection  on $S_{\varepsilon,R}(\mE)\widehat{\otimes}\Lambda(\mE^{\perp,*})
\widehat{\otimes}\Lambda(\mF^*)$ associated with
${g}_{\varepsilon,R}^{T\mM}$ as in \eqref{tensorconnection}.
In particular, in view of \cite[Remark~1.8]{Zhangfoliation}, one deduces that for any   $V\in\Gamma(\mM_R,\mE^\perp)$,
\begin{align}\label{[conn,hatc]}
\left[\widehat{\nabla}^{\varepsilon,R},\widehat{c}(V)\right]
=\widehat{c}\left(\nabla^{\mE^\perp}V\right).
\end{align}

Inspired by \cite[(2.21) and Remark 2.6]{Zhangfoliation}, we introduce the following key deformation of the sub-Dirac operator $D^{\mM_R}_\varepsilon$, 
\begin{align}\label{sbdirac_deform}
&D^{\mM_R}_{\varepsilon,R}=D^{\mM_R}_{\varepsilon}+
\frac{\widehat{c}(\rho\sigma)}{\varepsilon R}:
 \Gamma\left(\mM_R,
S_{\varepsilon,R}(\mE)\widehat{\otimes}\Lambda\left(\mE^{\perp,*}\right)
\widehat{\otimes}\Lambda\left(\mF^*\right)\right)
\notag\\
&\hspace{8em}
\longrightarrow
 \Gamma\left(\mM_R,
S_{\varepsilon,R}(\mE)\widehat{\otimes}\Lambda\left(\mE^{\perp,*}\right)
\widehat{\otimes}\Lambda\left(\mF^*\right)\right)
.
\end{align}

Recall that $D^{\partial \mM_R}$ is defined in \eqref{bd_subdirac}.
Set 
\begin{align}\label{bdop_deform}
&D^{\partial \mM_R}_{\varepsilon}
=D^{\partial \mM_R}-\frac{1}{\varepsilon}c\left(e_{\dim\mM}\right)
\widehat{c}(\sigma):
 \Gamma\left(\partial\mM_R,\left.\left(S(\mE)\widehat{\otimes}\Lambda\left(\mE^{\perp,*}\right)
\widehat{\otimes}\Lambda\left(\mF^{*}\right)\right)
\right|_{\partial\mM_R}\right)
\notag\\
&\hspace{8em}
\longrightarrow
 \Gamma\left(\partial\mM_R,\left.\left(S(\mE)\widehat{\otimes}\Lambda\left(\mE^{\perp,*}\right)
\widehat{\otimes}\Lambda\left(\mF^{*}\right)\right)\right|_{\partial\mM_R}\right)\ .
\end{align}
Then
$D^{\partial \mM_R}_{\varepsilon}$ is   the induced boundary
operator of $D^{\mM_R}_{\varepsilon,R}$. Write
\begin{equation*}
D^{\mM_R}_{\varepsilon,R,\pm}=\left.D^{\mM_R}_{\varepsilon,R}
\right|_{\Gamma(\mM_R,(S_{\varepsilon,R}(\mE)\widehat{\otimes}\Lambda(\mE^{\perp,*})
\widehat{\otimes}\Lambda(\mF^*))_\pm)}\ ,
\end{equation*}
\begin{align*}
D^{\partial \mM_R}_{\varepsilon,\pm}=\left.D^{\partial \mM_R}_{\varepsilon}
\right|_{
 \Gamma(\partial\mM_R,(S(\mE)\widehat{\otimes}\Lambda\left(\mE^{\perp,*}\right)
\widehat{\otimes}\Lambda\left(\mF^{*}\right))_{\pm}|_{\partial\mM_R})
}.
\end{align*}

For any $\lambda\in \spec\{D^{\partial \mM_R}_{\varepsilon}\}$,
the spectrum of $D^{\partial \mM_R}_{\varepsilon}$,
let $E_\lambda$ be the eigenspace corresponding to $\lambda$. For any $b\in\R$, denote by
$P_{\geqslant b,\varepsilon,R}$ and $P_{> b,\varepsilon,R}$ the orthogonal projections from the $L^2$-completion of
\begin{equation*}
 \Gamma\left(\partial\mM_R,\left.\left(S(\mE)\widehat{\otimes}\Lambda\left(\mE^{\perp,*}\right)
\widehat{\otimes}\Lambda\left(\mF^{*}\right)\right)\right|_{\partial\mM_R}\right)
\end{equation*}
onto $\oplus_{\lambda\geqslant b}E_\lambda$
and  $\oplus_{\lambda>b}E_\lambda$, respectively. Let$P_{\geqslant b,\varepsilon,R,\pm}$
and $P_{> b,\varepsilon,R,\pm}$ be
the restrictions of  $P_{\geqslant b,\varepsilon,R}$ and $P_{> b,\varepsilon,R}$
on the $L^2$-completions of
\begin{equation*}
 \Gamma\left(\partial\mM_R,\left.\left(S(\mE)\widehat{\otimes}\Lambda\left(\mE^{\perp,*}\right)
\widehat{\otimes}\Lambda\left(\mF^{*}\right)\right)_{\pm}\right|_{\partial\mM_R}\right).
\end{equation*}

One verifies  (cf. \cite{MR0397797} and \cite{MR1252030}) that the Atiyah-Patodi-Singer boundary  valued problems $(D^{\mM_R}_{\varepsilon,R,+},
P_{\geqslant 0,\varepsilon,R,+})$ and
$(D^{\mM_R}_{\varepsilon,R,-},P_{> 0,\varepsilon,R,-})$
  are elliptic, and that $(D^{\mM_R}_{\varepsilon,R,+},
P_{\geqslant 0,\varepsilon,R,+})$ is the adjoint of $(D^{\mM_R}_{\varepsilon,R,-},P_{>0,\varepsilon,R,-})
$. Set
\begin{align}
\ind \left(D^{\mM_R}_{\varepsilon,R,+},
P_{\geqslant 0,\varepsilon,R,+}\right)
&=\dim \ker \left(D^{\mM_R}_{\varepsilon,R,+},
P_{\geqslant 0,\varepsilon,R,+}\right)
\notag\\
&\hspace{3em}
-\dim \ker \left(D^{\mM_R}_{\varepsilon,R,-},
P_{>0,\varepsilon,R,-}\right).
\end{align}

Let $\|\cdot\|_{\partial \mM_R}$
denote the $L^2$-norm with respect to the volume element $dv_{\partial\mM_R}$
of  $(\partial\mM_R,g^{T\mM}|_{\partial\mM_R})$.  Let $e_i$, $1\leqslant i\leqslant \dim\mM-1$, be an orthonormal basis of $(\partial\mM,g^{T\mM}|_{\partial\mM})$.

\begin{prop}\label{prop:bdop_inverti}
For any (fixed) $R>0$, there exists
$\varepsilon_0>0$ (which may depend on $R$) such that for any $0<\varepsilon\leqslant \varepsilon_0$,
one has
\begin{align}\label{bdop_esti}
\left\|D^{\partial \mM_R}_{\varepsilon} s\right\|^2_{\partial\mM_R}
\geqslant \frac{1}{2}\sum_{i=1}^{\dim\mM} \left\| \widehat\nabla _{e_i}s\right\|^2_{\partial \mM_R}+
\frac{1}{4\varepsilon^2}\| s\|^2_{\partial\mM_R}\ ,
\end{align}
for any
$s\in \Gamma\left(\partial\mM_R,\left.\left(S(\mE)\widehat{\otimes}\Lambda\left(\mE^{\perp,*}\right)
\widehat{\otimes}\Lambda\left(\mF^{*}\right)\right)\right|_{\partial\mM_R}\right)$.
\end{prop}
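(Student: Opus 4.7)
My plan is to square the deformed boundary operator, recognize that the $O(\varepsilon^{-2})$ term is scalar and hence dominates, verify that the $O(\varepsilon^{-1})$ cross term is actually of order zero, and then apply a standard Weitzenb\"ock formula on the compact manifold $\partial\mM_R$.

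First, using the anticommutation $\{c(e_{\dim\mM}), \widehat{c}(\sigma)\}=0$ together with $c(e_{\dim\mM})^2=-1$ and $\widehat{c}(\sigma)^2=|\sigma|^2=1$, one checks $(c(e_{\dim\mM})\widehat{c}(\sigma))^2=1$, so that
\begin{equation*}
\bigl(D^{\partial\mM_R}_{\varepsilon}\bigr)^2
=\bigl(D^{\partial\mM_R}\bigr)^2+\frac{1}{\varepsilon^2}
-\frac{1}{\varepsilon}\bigl\{D^{\partial\mM_R},\,c(e_{\dim\mM})\widehat{c}(\sigma)\bigr\}.
\end{equation*}
A short computation using \eqref{bd_subdirac} and \eqref{[conn,hatc]} shows that in this anti-commutator the first-order parts cancel: for each $i<\dim\mM$ the identity $c(e_{\dim\mM})c(e_i)c(e_{\dim\mM})\widehat{c}(\sigma)+c(e_{\dim\mM})\widehat{c}(\sigma)c(e_{\dim\mM})c(e_i)=c(e_i)\widehat{c}(\sigma)-c(e_i)\widehat{c}(\sigma)=0$ kills the coefficient of $\widehat{\nabla}_{e_i}$, leaving only zeroth-order contributions built from $c(\nabla^{T\mM}_{e_i}e_{\dim\mM})\widehat{c}(\sigma)$, $c(e_{\dim\mM})\widehat{c}(\nabla^{\mE^\perp}_{e_i}\sigma)$ and $\pi_{ii}c(e_{\dim\mM})\widehat{c}(\sigma)$. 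On the compact manifold $\partial\mM_R$ these are uniformly bounded: $\pi_{ii}$ and $\nabla^{T\mM}_{e_i}e_{\dim\mM}$ by the geometry of $(\mM,g^{T\mM})$ restricted to $\partial\mM_R$, while $\nabla^{\mE^\perp}_{e_i}\sigma=R^{-1}\nabla^{\mE^\perp}_{e_i}(\rho\sigma)$ is bounded by Lemma \ref{lem:esti} together with $\rho\equiv R$ on $\partial\mM_R$. Hence $\{D^{\partial\mM_R},c(e_{\dim\mM})\widehat{c}(\sigma)\}$ is a zeroth-order operator whose pointwise operator norm is bounded by some constant $C_R$ depending only on $R$.

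Next, $D^{\partial\mM_R}$ is itself an elliptic self-adjoint Dirac-type operator on the compact manifold $\partial\mM_R$ (the modified Clifford action $\widetilde{c}(e_i)=-c(e_{\dim\mM})c(e_i)$ satisfies the Clifford relations). The standard Lichnerowicz-Weitzenb\"ock formula and integration by parts therefore give, after possibly enlarging $C_R$ to absorb the bounded curvature and second-fundamental-form terms,
\begin{equation*}
\bigl\langle (D^{\partial\mM_R})^2 s,s\bigr\rangle_{\partial\mM_R}
\geqslant\sum_{i=1}^{\dim\mM-1}\bigl\|\widehat{\nabla}_{e_i}s\bigr\|^2_{\partial\mM_R}-C_R\|s\|^2_{\partial\mM_R}.
\end{equation*}
Combining the two displays yields
\begin{equation*}
\|D^{\partial\mM_R}_{\varepsilon}s\|^2_{\partial\mM_R}
\geqslant\sum_{i=1}^{\dim\mM-1}\|\widehat{\nabla}_{e_i}s\|^2_{\partial\mM_R}
+\Bigl(\frac{1}{\varepsilon^2}-C_R-\frac{C_R}{\varepsilon}\Bigr)\|s\|^2_{\partial\mM_R}.
\end{equation*}
Choosing $\varepsilon_0=\varepsilon_0(R)>0$ small enough that $C_R+C_R/\varepsilon\leqslant\frac{3}{4\varepsilon^2}$ for all $0<\varepsilon\leqslant\varepsilon_0$ turns the last bracket into at least $\frac{1}{4\varepsilon^2}$, and the claim follows (with the full gradient term rather than merely one half).

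The main obstacle is the cancellation asserted in the second paragraph: without it one would be left with an $O(\varepsilon^{-1})$ first-order perturbation in $(D^{\partial\mM_R}_{\varepsilon})^2$ that could never be dominated by the scalar $\varepsilon^{-2}$ term. The cancellation depends essentially on the mixed choice of one factor $c(e_{\dim\mM})$ from the spinor Clifford action and one $\widehat{c}(\sigma)$ from the exterior Clifford action in \eqref{bdop_deform}, and is the algebraic raison d'\^etre of this deformation.
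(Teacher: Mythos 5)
Your proposal is correct and follows essentially the same route as the paper's proof: square the deformed operator, observe that $(c(e_{\dim\mM})\widehat{c}(\sigma))^2=1$ so the scalar $\varepsilon^{-2}$ term appears, check that the anticommutator cross term is of order zero (your explicit Clifford cancellation is precisely the verification behind the paper's ``it is easy to see''), bound $(D^{\partial\mM_R})^2$ from below by a Lichnerowicz--Weitzenb\"ock argument with an $O_R(1)$ error, and take $\varepsilon$ small. The one caveat is your parenthetical claim of the \emph{full} gradient coefficient: since $\widehat{\nabla}$ restricted to $\partial\mM_R$ is not exactly the Clifford connection for the boundary Clifford action $-c(e_{\dim\mM})c(e_i)$, the comparison of $(D^{\partial\mM_R})^2$ with the Bochner Laplacian leaves first-order discrepancy terms that must be absorbed via Cauchy--Schwarz, and this costs exactly the factor $\frac{1}{2}$ appearing in \eqref{bdop_esti} (your sum over the tangential directions $1\leqslant i\leqslant\dim\mM-1$ is the intrinsically meaningful reading of the statement, since the normal derivative of a section defined only on $\partial\mM_R$ is not defined).
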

\begin{proof}
From \eqref{bdop_deform}, one deduces that
\begin{align}\label{3.3}
 \left(D^{\partial \mM_R}_{\varepsilon} \right)^2  = \left(D^{\partial \mM_R} \right)^2 -\frac{ 1}{\varepsilon} \left[D^{\partial \mM_R} ,c\left(e_{\dim\mM}\right)\widehat c(\sigma)\right]+\frac{1}{\varepsilon^2}.
\end{align}

It is easy to see that  $[D^{\partial \mM_R} ,c\left(e_{\dim\mM}\right)\widehat c(\sigma)]$ is of zeroth order. On the other hand, by the Lichnerowicz formula one deduces that
\begin{align}\label{3.4}
\left\langle \left(D^{\partial \mM_R} \right)^2s,s\right\rangle_{\partial\mM_R}\geqslant \frac{1}{2}
\sum_{i=1}^{\dim\mM} \left\| \widehat\nabla  _{e_i}s\right\|^2_{\partial \mM_R}
+O_R(1) \,\| s\|^2_{\partial\mM_R}\,.
\end{align}

From (\ref{3.3}) and (\ref{3.4}), one gets (\ref{bdop_esti}).
\end{proof}

Recall that $D^M$ is introduced in \eqref{Mdirac} and \eqref{Mdiracpm}.

Since $D^{\mM_R}_\varepsilon=D^{\mM_R}$ near $\partial\mM_R$, by a simple homotopy in the interior of $\mM_R$ and a simplified version  of the  analytic   Riemann-Roch property proved in \cite[Theorem~2.4]{MR1870658}, one obtains from Proposition \ref{prop:bdop_inverti}  the following proposition.

\begin{prop} \label{prop:riemroch}
For any $R>0$, there exists $\varepsilon_0>0$ such that if   $0<\varepsilon\leqslant \varepsilon_0$,
 then
\begin{equation}\label{riemroch}
 \ind \left(D^{\mM_R}_{\varepsilon,R,+},
P_{\geqslant 0,\varepsilon,R,+}\right)=\ind\left (D^M_+\right).
\end{equation}
\end{prop}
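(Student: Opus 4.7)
The proof combines three ingredients: the boundary invertibility from Proposition \ref{prop:bdop_inverti}, a homotopy in the interior of $\mM_R$, and the simplified analytic Riemann--Roch property of \cite{MR1870658}. First, Proposition \ref{prop:bdop_inverti} shows that for $\varepsilon$ small enough (depending on $R$), the boundary operator $D^{\partial\mM_R}_\varepsilon$ is uniformly invertible with spectral gap of order $1/\varepsilon$. Hence the APS problem $(D^{\mM_R}_{\varepsilon,R,+}, P_{\geqslant 0,\varepsilon,R,+})$ is Fredholm, and its index is preserved under any smooth homotopy of the interior operator that leaves a collar of $\partial\mM_R$ untouched.

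Next I will construct such a homotopy of $D^{\mM_R}_{\varepsilon,R}$, supported in $\mM_{2R/3}$ (where the metric equals $g^{T\mM}_\varepsilon$ and the collar $\mM_R\setminus \mM_{3R/4}$ is preserved), whose endpoint is a model operator whose small $L^2$-eigensections concentrate in a tubular neighborhood of the embedded section $\jmath(M)\subset \mM_R$. The concentration is already visible from the Lichnerowicz formula \eqref{Lichnerowicz} applied to $(D^{\mM_R}_{\varepsilon,R})^2$: the square of the deformation $\widehat{c}(\rho\sigma)/(\varepsilon R)$ produces a confining potential $\rho^2/(\varepsilon R)^2$ whose unique zero set is $\jmath(M)$, so low-energy eigensections are exponentially localized there as $\varepsilon\to 0$. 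By the stability established in the first step, the index is preserved along this homotopy.

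Finally I will appeal to the simplified analytic Riemann--Roch property of \cite[Theorem~2.4]{MR1870658}: when the low-energy spectrum of a Dirac-type operator on a manifold with boundary concentrates near a closed submanifold in the interior, the APS index equals the index of the induced Dirac-type operator on that submanifold. Restricting the relevant bundles to $\jmath(M)\simeq M$ gives $S(\mE)|_{\jmath(M)}=S(TM)$ and $\mF|_{\jmath(M)}=F$, while the normal factor $\Lambda(\mE^{\perp,*})|_{\jmath(M)}$ collapses (under the harmonic-oscillator/Clifford pairing determined by $\widehat{c}(\rho\sigma)$ in the vertical directions) to a one-dimensional contribution. The induced operator on $\jmath(M)\simeq M$ is then identified with the twisted Dirac operator $D^M$ of \eqref{Mdirac}, yielding \eqref{riemroch}.

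The main obstacle lies in the third step. Executing the localization rigorously requires controlling, uniformly in $R$ and as $\varepsilon\to 0$, the cross terms $\widehat{c}(\nabla^{\mE^\perp}(\rho\sigma))/(\varepsilon R)$ produced in $(D^{\mM_R}_{\varepsilon,R})^2$ by the commutator formula for $\widehat\nabla^{\varepsilon,R}$ with $\widehat{c}$; Lemma \ref{lem:esti} is precisely the ingredient that tames these terms, and it is this estimate that forces $\varepsilon$ to be chosen small in terms of $R$. One must also verify that the $\Lambda(\mE^{\perp,*})$ factor collapses to a trivial line under the induced pairing, so that the localized index is $\ind(D^M_+)$ itself rather than its multiple by $2^{\rk(\mE^\perp)/2}$; this matching of gradings, together with the uniform estimates above, will occupy the next section.
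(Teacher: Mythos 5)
Your proposal is correct and follows essentially the same route as the paper, whose own proof is exactly the combination you describe: Proposition \ref{prop:bdop_inverti} for the boundary operator, a simple homotopy in the interior of $\mM_R$ (using that $D^{\mM_R}_\varepsilon=D^{\mM_R}$ near $\partial\mM_R$), and the simplified analytic Riemann--Roch property of \cite[Theorem~2.4]{MR1870658}, which localizes the Atiyah--Patodi--Singer index to $\jmath(M)$ and identifies it with $\ind(D^M_+)$. Your additional remarks on the one-dimensional harmonic-oscillator collapse of the $\Lambda(\mE^{\perp,*})$ factor and the role of Lemma \ref{lem:esti} are consistent with how that cited result is intended to be applied.
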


 Let $dv_{\varepsilon,R}$ denote the volume element
of $(\mM_R,{g}^{T\mM}_{\varepsilon,R})$, and
   $\|\cdot\|_{\varepsilon,R}$  denote the corresponding $L^2$-norm.
From (\ref{asindex}) and Propositions \ref{prop:bdop_inverti} and \ref{prop:riemroch}, one sees that in order to prove Theorem \ref{t0.1}, one need only to prove the following result.

\begin{thm}\label{t3.4}

Under the assumptions of Theorem \ref{t0.1},
there exists $R_0>0$ such that for any $R\geqslant R_0$, there exist $c_1>0$ and $\varepsilon_1>0$ such that
for any $0<\varepsilon\leqslant \varepsilon_1$ and smooth section $s\in\Gamma(\mM_R, S_{\varepsilon,R}(\mE)\widehat{\otimes}\Lambda(\mE^{\perp,*})
\widehat{\otimes}\Lambda(\mF^{*}))$ verifying $P_{\geqslant 0,\varepsilon,R}(s|_{\partial \mM_R})=0$, one has
\begin{align}\label{3.6}
\left\| D^{\mM_R}_{\varepsilon,R}s\right\|^2_{\varepsilon,R}\geqslant
c_1\left(\sum_{i=1}^{\dim \mM}\left\|\widehat\nabla^{\varepsilon,R}_{h_i}s\right\|^2_{\varepsilon,R}+\frac{1}{\varepsilon^2}\|s\|^2_{\varepsilon,R}\right).
\end{align}
\end{thm}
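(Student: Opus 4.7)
The plan is to compute $(D^{\mM_R}_{\varepsilon,R})^2$ via a Lichnerowicz-type formula, establish a pointwise lower bound of order $1/\varepsilon^2$ on its zeroth-order part, and then integrate against $s$, using the APS boundary condition together with Proposition~\ref{prop:bdop_inverti} to absorb the boundary contribution. I would first expand
\[
\left(D^{\mM_R}_{\varepsilon,R}\right)^2 = \left(D^{\mM_R}_{\varepsilon}\right)^2 + \frac{1}{\varepsilon R}\left[D^{\mM_R}_\varepsilon, \widehat c(\rho\sigma)\right] + \frac{\rho^2}{\varepsilon^2 R^2},
\]
using $\widehat c(\rho\sigma)^2=\rho^2$, and apply the Lichnerowicz formula~\eqref{Lichnerowicz} to $(D^{\mM_R}_\varepsilon)^2$. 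By~\eqref{scalarcurv}, on $\mM_{2R/3}$ the scalar-curvature contribution $k_{\varepsilon,R}/4$ exceeds $c/\varepsilon^2$ for some $c>0$ since $k^{TM}>0$ on the compact $M$. By Lemma~\ref{t3.1} the horizontal-horizontal components of $(\nabla^{\mE^\perp})^2$ and $(\nabla^{\mF})^2$ vanish, so the remaining curvature terms in~\eqref{Lichnerowicz} each involve at least one vertical direction and do not pick up a $1/\varepsilon^2$ factor from the rescaling; they are $O_R(1)$. Using~\eqref{[conn,hatc]}, the commutator $[D^{\mM_R}_\varepsilon,\widehat c(\rho\sigma)]$ reduces to zeroth-order Clifford expressions whose horizontal factors of $\nabla^{\mE^\perp}(\rho\sigma)$ are controlled uniformly by Lemma~\ref{lem:esti}, while for vertical factors the nonpositive curvature of the fibers provides a pointwise bound on $\nabla^{\mE^\perp}(\rho\sigma)$. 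Dividing by $\varepsilon R$, the whole commutator is of size $O(1/(\varepsilon^2 R))+O(1/(\varepsilon R))$, absorbable into $c/\varepsilon^2$ once $R$ is large and $\varepsilon$ small. On the collar $\mM_R\setminus\mM_{2R/3}$ the rescaling is turned off and $\rho^2/(\varepsilon^2 R^2)$ alone supplies the required $1/\varepsilon^2$ positivity. Assembled on all of $\mM_R$, this yields a pointwise inequality $(D^{\mM_R}_{\varepsilon,R})^2 \geqslant -\Delta_{\varepsilon,R} + c/\varepsilon^2$, where $-\Delta_{\varepsilon,R}$ denotes the Bochner Laplacian associated with $\widehat\nabla^{\varepsilon,R}$.

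Integrating against $s$ over $(\mM_R,g^{T\mM}_{\varepsilon,R})$, the Bochner term contributes $\sum_i\|\widehat\nabla^{\varepsilon,R}_{h_i}s\|^2_{\varepsilon,R}$ plus a boundary integral, while a separate integration by parts relates $\|D^{\mM_R}_{\varepsilon,R}s\|^2_{\varepsilon,R}$ to $\langle(D^{\mM_R}_{\varepsilon,R})^2 s,s\rangle_{\varepsilon,R}$ modulo another boundary integral. Using the product-structure expression $D^{\mM_R}_{\varepsilon,R}|_{\partial\mM_R} = c(e_{\dim\mM})(\widehat\nabla_{e_{\dim\mM}}+D^{\partial\mM_R}_\varepsilon)$, the net boundary contribution reduces to a multiple of $-\langle D^{\partial\mM_R}_\varepsilon s|_{\partial\mM_R},s|_{\partial\mM_R}\rangle_{\partial\mM_R}$, up to the standard second-fundamental-form correction. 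The APS condition $P_{\geqslant 0,\varepsilon,R}(s|_{\partial\mM_R})=0$ places $s|_{\partial\mM_R}$ in the strictly negative spectral subspace of $D^{\partial\mM_R}_\varepsilon$, so this boundary term has the correct sign, and combining with the invertibility estimate~\eqref{bdop_esti} and a standard trace inequality, the negative boundary contribution from the Bochner formula is absorbed, yielding~\eqref{3.6}.

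The main technical obstacle is the analysis in the transition region $\mM_{3R/4}\setminus\mM_{2R/3}$, where the metric interpolates between $g^{T\mM}_\varepsilon$ and $g^{T\mM}$: there the scalar curvature is no longer cleanly $k^{TM}/\varepsilon^2+O_R(1)$, and the Lichnerowicz formula acquires additional terms involving derivatives of the cut-off $\psi(\rho/R)$ of size $O(1/R)$, so one must carefully exploit that $\rho^2/(\varepsilon^2 R^2)$ is already of order $1/\varepsilon^2$ in that region to compensate. A secondary subtlety is verifying that the vertical part of $[D^{\mM_R}_\varepsilon,\widehat c(\rho\sigma)]$ is genuinely $O(1/(\varepsilon R))$ rather than $O(1/\varepsilon)$; this follows because the vertical directions in $D^{\mM_R}_\varepsilon$ carry no $1/\varepsilon$ factor, by the orthogonal splitting~\eqref{3.2}.
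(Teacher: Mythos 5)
Your plan uses the same analytic ingredients as the paper: the splitting $(D^{\mM_R}_{\varepsilon,R})^2=(D^{\mM_R}_{\varepsilon})^2+\frac{1}{\varepsilon R}[D^{\mM_R}_{\varepsilon},\widehat c(\rho\sigma)]+\frac{\rho^2}{\varepsilon^2R^2}$ as in \eqref{3.13}, scalar-curvature positivity of order $1/\varepsilon^2$ on $\mM_{\frac23 R}$ via \eqref{scalarcurv}, the commutator bound \eqref{3.11} via Lemma \ref{lem:esti}, and the APS boundary term handled by Green's formula together with Proposition \ref{prop:bdop_inverti}. The structural difference is that you want a single global pointwise inequality $(D^{\mM_R}_{\varepsilon,R})^2\geqslant-\Delta_{\varepsilon,R}+c/\varepsilon^2$, whereas the paper proves three localized estimates (Propositions \ref{prop:bdestimate}, \ref{prop:intestimate}, \ref{t4.4}) and glues them with the cut-off functions $\alpha_1,\alpha_2,\alpha_3$ of \eqref{3.17}--\eqref{3.19}, the gluing error being controlled by \eqref{3.20}. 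This is not merely cosmetic: in the overlap region the paper never invokes the Lichnerowicz formula of the interpolated metric at all --- in Proposition \ref{t4.4} one simply keeps $\|D^{\mM_R}_{\varepsilon}s\|^2\geqslant 0$ and uses \eqref{3.11} together with $\rho/R\geqslant\frac13$ --- precisely so that no curvature of $g^{T\mM}_{\varepsilon,R}$ in the transition zone needs to be estimated with sharp constants.

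The genuine gap in your proposal is at that transition zone $\mM_{\frac34 R}\setminus\mM_{\frac23 R}$, which you correctly single out but then misestimate. The extra terms created by the interpolation are not of size $O(1/R)$: the horizontal metric coefficient $a=\varepsilon^2\left(1-\psi\left(\frac{\rho}{R}\right)\right)+\psi\left(\frac{\rho}{R}\right)$ degenerates to $\varepsilon^2$, so derivatives of the cut-off enter the scalar curvature and the $\mS$-type terms through quantities such as $|da|^2/a^2$ and $|\nabla^2a|/a$, and the rescaled Clifford contractions contribute further factors of $a^{-1/2}\leqslant\varepsilon^{-1}$; these terms are of the same order $1/\varepsilon^2$ as the positive term $\rho^2/(\varepsilon^2R^2)\geqslant\frac{4}{9\varepsilon^2}$ you propose to beat them with. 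What actually saves such an argument is that the interpolation is spread over a collar of width proportional to $R$, so each dangerous term carries an extra factor $1/R$ or $1/R^2$, and the comparison with $\frac{4}{9\varepsilon^2}$ succeeds only after taking $R\geqslant R_0$ large --- exactly the role of $R_0$ in the statement. Your write-up asserts the compensation but supplies neither this mechanism nor the required constant comparison, and with the claimed size $O(1/R)$ the difficulty would appear trivial when it is the crux; this is the step your route must add (or else localize as the paper does). Two smaller corrections: the rescaling is not ``turned off'' on all of $\mM_R\setminus\mM_{\frac23 R}$, only on $\mM_R\setminus\mM_{\frac34 R}$; and after rescaling the twisting-curvature terms in \eqref{Lichnerowicz} are of size $O_R(1)/\varepsilon$, not $O_R(1)$ (harmless, since $O_R(1)/\varepsilon\ll 1/\varepsilon^2$, but it matters for bookkeeping). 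Your boundary treatment itself is sound and coincides with the computation in Proposition \ref{prop:bdestimate}.
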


\section{Proof of Theorem \ref{t3.4}}\label{sec:proofmain}

This section is organized as follows. In Section \ref{s4.1}, we establish an estimating result near $\partial\mM_R$. In Section \ref{s4.2}, we establish two interior estimating results. In Section \ref{s4.3}, we complete the proof of Theorem \ref{t3.4}.

\subsection{The estimate near the boundary}\label{s4.1}

In this subsection, we prove the following estimating result near $\partial\mM_R$.

\begin{prop}\label{prop:bdestimate}
For any   $R>0$, there exist $c_2>0$ and $\varepsilon_2>0$ such that for any $0<\varepsilon\leqslant \varepsilon_2$ and any  $s\in \Gamma(\mM_R,S_{\varepsilon,R}(\mE)\widehat{\otimes}
\Lambda(\mE^{\perp,*})\widehat{\otimes}\Lambda(\mF^{*}))$ verifying ${\rm Supp}(s)\subseteq\mM_R\setminus \mM_{\frac{3}{4}R}$ and $ P_{\geqslant 0,\varepsilon,R}(s|_{\partial \mM_R})=0$,
 one has
\begin{align}\label{est_nearbd}
\left\|D^{\mM_R}_{\varepsilon,R}s\right\|^2_{\varepsilon,R}
\geqslant c_2\sum_{i=1}^{\dim \mM}\left\|\widehat\nabla _{h_i}s\right\|^2_{\varepsilon,R}+\frac{1}{2\,\varepsilon^2}\|s\|^2_{\varepsilon,R}.
\end{align}
\end{prop}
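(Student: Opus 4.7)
Since the support of $s$ is contained in $\mM_R\setminus\mM_{\frac{3}{4}R}$, the cut-off metric $g^{T\mM}_{\varepsilon,R}$ coincides there with the unscaled metric $g^{T\mM}$ by (\ref{cut_metric_inbd}), so $D^{\mM_R}_\varepsilon$ reduces to the unmodified sub-Dirac operator $D^{\mM_R}$ and $\|\cdot\|_{\varepsilon,R}$ is the standard $L^2$-norm. The overall strategy is to extract the $\frac{1}{2\varepsilon^2}\|s\|^2$ bound not from Proposition \ref{prop:bdop_inverti} (which on its own only yields the weaker constant $\frac{1}{4\varepsilon^2}$) but from the pointwise identity $\widehat c(\rho\sigma)^2=\rho^2$ together with the lower bound $\rho\geqslant\tfrac{3R}{4}$ on the support of $s$.

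The natural first step is the expansion
\begin{align*}
\bigl\|D^{\mM_R}_{\varepsilon,R}s\bigr\|^2_{\varepsilon,R}
=\bigl\|D^{\mM_R}s\bigr\|^2
+\tfrac{2}{\varepsilon R}\,\Re\bigl\langle D^{\mM_R}s,\widehat c(\rho\sigma)s\bigr\rangle
+\tfrac{1}{\varepsilon^2 R^2}\bigl\|\widehat c(\rho\sigma)s\bigr\|^2,
\end{align*}
so that the last term is at least $\tfrac{9}{16\varepsilon^2}\|s\|^2$. For the leading term I would apply the Lichnerowicz formula (\ref{Lichnerowicz}) and Green's formula, obtaining $\sum_i\|\widehat\nabla_{h_i}s\|^2+O_R(1)\|s\|^2+\mathcal B_1$, where the curvature terms from (\ref{Lichnerowicz}) are bounded by a constant depending only on $R$ (since the metric is fixed on $\mathrm{Supp}(s)$) and $\mathcal B_1$ is a boundary integral on $\partial\mM_R$. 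For the cross term I would use that $\widehat c(\rho\sigma)$ is self-adjoint, that $c(h_j)$ and $\widehat c(\rho\sigma)$ graded-anticommute, and the Leibniz rule (\ref{[conn,hatc]}) to compute
\begin{align*}
\bigl\{D^{\mM_R},\widehat c(\rho\sigma)\bigr\}
=\sum_{j=1}^{\dim\mM}c(h_j)\,\widehat c\bigl(\nabla^{\mE^\perp}_{h_j}(\rho\sigma)\bigr),
\end{align*}
which is pointwise $O_R(1)$ by Lemma \ref{lem:esti} for horizontal $h_j$ and by the standard control of $\nabla^{\mE^\perp}(\rho\sigma)$ in the vertical fibre. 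Integration by parts then turns the cross term into $O_R\bigl((\varepsilon R)^{-1}\bigr)\|s\|^2+\mathcal B_2$ for a second boundary integral $\mathcal B_2$.

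The crucial observation is that $\mathcal B_1+\mathcal B_2$ is exactly the canonical APS boundary pairing. Indeed, $D^{\partial\mM_R}_\varepsilon$ is the induced boundary operator of $D^{\mM_R}_{\varepsilon,R}$ (as noted after (\ref{bdop_deform})), so in the collar one has $D^{\mM_R}_{\varepsilon,R}=c(e_{\dim\mM})\bigl(\widehat\nabla_{e_{\dim\mM}}+D^{\partial\mM_R}_\varepsilon\bigr)+\text{l.o.t.}$, and a direct Green's formula computation along the lines of \cite[Lemmas 2.1 and 2.2]{MR1252030} gives $\mathcal B_1+\mathcal B_2=-\langle D^{\partial\mM_R}_\varepsilon s|_{\partial\mM_R},\,s|_{\partial\mM_R}\rangle_{\partial\mM_R}+O_R(1)\|s|_{\partial\mM_R}\|^2_{\partial\mM_R}$. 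The hypothesis $P_{\geqslant 0,\varepsilon,R}(s|_{\partial\mM_R})=0$ places $s|_{\partial\mM_R}$ in the negative spectral subspace of $D^{\partial\mM_R}_\varepsilon$, whose eigenvalues by Proposition \ref{prop:bdop_inverti} all satisfy $\lambda\leqslant -\tfrac{1}{2\varepsilon}$; hence $-\langle D^{\partial\mM_R}_\varepsilon s|_{\partial\mM_R},s|_{\partial\mM_R}\rangle\geqslant \tfrac{1}{2\varepsilon}\|s|_{\partial\mM_R}\|^2$, which is both nonnegative and absorbs the $O_R(1)$ boundary error for $\varepsilon$ small. Taking $\varepsilon$ sufficiently small relative to $R$, the interior $O_R(1)\|s\|^2$ and $O_R((\varepsilon R)^{-1})\|s\|^2$ corrections are then dominated by $\tfrac{9}{16\varepsilon^2}\|s\|^2$, and (\ref{est_nearbd}) follows with, say, $c_2=1$.

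The main obstacle I anticipate is the computation in the third paragraph: checking rigorously that $\mathcal B_1+\mathcal B_2$ really equals the canonical APS pairing, with no stray normal-derivative term $\langle\widehat\nabla_{e_{\dim\mM}}s|_\partial,s|_\partial\rangle$ left over. This involves careful bookkeeping of the second-fundamental-form contributions implicit in (\ref{bd_subdirac}) and the interaction of $D^{\mM_R}$ with the zeroth-order perturbation $\widehat c(\rho\sigma)/(\varepsilon R)$ in the collar, which is precisely why the operator $D^{\partial\mM_R}_\varepsilon$ is defined exactly as in (\ref{bdop_deform}).
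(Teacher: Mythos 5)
Your argument is correct and follows essentially the same route as the paper's proof: reduce to the unscaled metric on ${\rm Supp}(s)$, expand the square of $D^{\mM_R}+\frac{\widehat c(\rho\sigma)}{\varepsilon R}$, bound the zeroth-order (anti)commutator via Lemma \ref{lem:esti}, use $\rho\geqslant\frac{3}{4}R$ to produce the $\frac{9}{16\varepsilon^2}$ term, and control the boundary contribution through the identity \eqref{bd_relation} together with the APS condition and the spectral gap $|\lambda|\geqslant\frac{1}{2\varepsilon}$ from Proposition \ref{prop:bdop_inverti}. The normal-derivative term you worry about does appear, but exactly as in \eqref{3.8} it is absorbed into the Green identity for the Bochner Laplacian (yielding the full sum $\sum_{i=1}^{\dim\mM}\|\widehat\nabla_{h_i}s\|^2$), so that the remaining boundary integrand is precisely $\left\langle s,\left(-D^{\partial\mM_R}_{\varepsilon}+\frac{1}{2}\sum_{i=1}^{\dim\mM-1}\pi_{ii}\right)s\right\rangle$, handled as in \eqref{3.9}.
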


\begin{proof} Since $\psi(\frac{\rho}{R})=1$ on $\mM_R\setminus\mM_{\frac{3}{4}R}$,
by (\ref{subdirac_epsilonR}) and (\ref{sbdirac_deform}),
one has on $\mM_R\setminus\mM_{\frac{3}{4}R}$ that
\begin{align}\label{3.7}
D^{\mM_R}_{\varepsilon,R}= D^{\mM_R} + \frac{\widehat c(\rho\sigma)}{\varepsilon R}.
\end{align}

We now proceed as in the  proof of \cite[Proposition 2.4]{TZ}.
By Green's formula (cf. \cite[(2.28)]{MR1252030}), one deduces that
\begin{align}\label{integrationbyparts}
\left\|D^{\mM_R}_{\varepsilon,R} s\right\|^2_{\varepsilon,R}
&
=\int_{\mM_R}
\left\langle s,\left (D^{\mM_R}_{\varepsilon,R}\right)^2 s\right\rangle
{dv}_{\varepsilon,R}
\notag\\
&\qquad
+\int_{\partial\mM_R}\left\langle s,
c\left(e_{\dim \mM}\right)D^{\mM_R}_{\varepsilon,R} s\right\rangle
dv_{\partial\mM_R}
\end{align}
for any $s\in \Gamma(\mM_R,S_{\varepsilon,R}(\mE)\widehat{\otimes}
\Lambda(\mE^{\perp,*})\widehat{\otimes}\Lambda(\mF^{*}))$ with ${\rm Supp}(s)\subseteq\mM_R\setminus \mM_{\frac{3}{4}R}$.

By (\ref{3.7}), one has
\begin{align}\label{(sbdirac_deform)2}
\left(D^{\mM_R}_{\varepsilon,R}\right)^2
=\left(D^{\mM_R} \right)^2+\frac{1}{\varepsilon R}\left[ D^{\mM_R} ,\widehat c(\rho\sigma)\right]+\frac{\rho^2}{\varepsilon^2R^2}.
\end{align}

Following \cite[(2.26) and (2.27)]{MR1252030}, one verifies that on $\partial\mM_R$\,,
\begin{align}\label{bd_relation}
c(e_{\dim \mM})D^{\mM_R}_{\varepsilon,R}
=-\widehat{\nabla}_{e_{\dim\mM}}
-D^{\partial\mM_R}_{\varepsilon}
+\frac{1}{2}\sum_{i=1}^{\dim\mM-1}\pi_{ii}\ .
\end{align}

By using the Lichnerowicz formula for $(D^{\mM_R}  )^2$ and proceeding as as
 \cite[(2.10) and (2.11)]{TZ}, one gets for section $s$ with ${\rm Supp}(s)\subseteq\mM_R\setminus \mM_{\frac{3}{4}R}$ that
\begin{multline}\label{3.8}
\int_{\mM_R}
\left\langle s,\left (D^{\mM_R}\right)^2 s\right\rangle
{dv}_{\varepsilon,R} - \int_{\partial\mM_R}\left\langle s,\widehat{\nabla}_{e_{\dim\mM}}\right\rangle
dv_{\partial\mM_R}
\\
=\sum_{i=1}^{\dim\mM}\left\|\widehat \nabla_{h_i}s\right\|^2 +O_R(1)\,\|s\|^2.
\end{multline}

By Proposition \ref{prop:bdop_inverti}   and proceeding as in \cite[(2.21)]{TZ}, one sees that when $\varepsilon>0$ is small enough,   for any smooth section $s$ verifying   $ P_{\geqslant 0,\varepsilon,R}(s|_{\partial \mM_R})=0$, one has that
\begin{align}\label{3.9}
\int_{\partial\mM_R}\left\langle s, -D^{\partial\mM_R}_{\varepsilon} s\right\rangle dv_{\partial\mM_R}
+\frac{1}{2}\int_{\partial\mM_R}\left\langle s,\sum_{i=1}^{\dim\mM-1}\pi_{ii}\,s\right\rangle  dv_{\partial\mM_R}
\geqslant 0.
\end{align}

Since $[D^{\mM_R},\widehat c(\rho\sigma)]$ is of zeroth order, and $\frac{\rho}{R}\geqslant \frac{3}{4}$ on $\mM_R\setminus\mM_{\frac{3}{4}R}$, from (\ref{integrationbyparts})-(\ref{3.9}), one gets (\ref{est_nearbd}).
\end{proof}

\subsection{The interior estimates}\label{s4.2}

From now on, we  assume that there exists $\delta>0$ such that
\begin{equation}\label{posi_scala}
k^{TM}\geqslant \delta\ \text{over}\ M.
\end{equation}

We prove two interior estimating results. The first one is as follows.

\begin{prop}\label{prop:intestimate}
There exists $R_1>0$ such that for any $R\geqslant R_1$,
there exist $c_3>0$ and $\varepsilon_3>0$ such that for any $0<\varepsilon\leqslant\varepsilon_3$, one has
\begin{align}\label{inter_esti}
\left\|D^{\mM_R}_{\varepsilon,R}
s\right\|_{\varepsilon,R}^2
\geqslant  c_3\sum_{i=1}^{\dim\mM} \left\|\widehat\nabla^{\varepsilon,R}_{h_i}s\right\|^2_{\varepsilon,R}
+\frac{\delta}{9\,\varepsilon^2}\|s\|_{\varepsilon,R}^2,
\end{align}
for any $s\in \Gamma(\mM_R,S_{\varepsilon,R}(\mE)\widehat{\otimes}
\Lambda(\mE^{\perp,*})\widehat{\otimes}\Lambda(\mF^{*}))$
supported in $\mM_{\frac{2}{3}R}$\,.
\end{prop}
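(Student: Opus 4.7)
\textbf{Proof plan for Proposition~\ref{prop:intestimate}.}  On $\mM_{\frac{2}{3}R}$ the metric $g^{T\mM}_{\varepsilon,R}$ coincides with $g^{T\mM}_\varepsilon$ from \eqref{3.2}, so the $(\varepsilon,R)$-decorated geometry reduces to the purely $\varepsilon$-rescaled one, and since $\operatorname{Supp}(s)\subset\mM_{\frac{2}{3}R}$ no boundary contributions appear under integration by parts.  I will square $D^{\mM_R}_{\varepsilon,R}$ via \eqref{sbdirac_deform}: using $\widehat c(\rho\sigma)^2=\rho^2\,\mathrm{Id}$ together with \eqref{[conn,hatc]} and the anticommutation of $\widehat c(\rho\sigma)$ with the principal symbol of $D^{\mM_R}_\varepsilon$, one obtains
\begin{equation*}
\bigl(D^{\mM_R}_{\varepsilon,R}\bigr)^2
=\bigl(D^{\mM_R}_{\varepsilon}\bigr)^2
+\tfrac{1}{\varepsilon R}\,T_{\varepsilon,R}
+\tfrac{\rho^2}{\varepsilon^2 R^2},
\end{equation*}
where $T_{\varepsilon,R}=\sum_{j}c_{\varepsilon,R}(h_j)\,\widehat c\bigl(\nabla^{\mE^\perp}_{h_j}(\rho\sigma)\bigr)$ is of order zero.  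I then apply the Lichnerowicz formula \eqref{Lichnerowicz} to $(D^{\mM_R}_\varepsilon)^2$ and integrate by parts, converting the Bochner Laplacian into $\sum_i\|\widehat\nabla^{\varepsilon,R}_{h_i}s\|^2_{\varepsilon,R}$.

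The whole proposition turns on how the remaining zeroth order contributions scale in $\varepsilon$.  By \eqref{scalarcurv} and \eqref{posi_scala}, one has $k_{\varepsilon,R}/4 \geqslant \delta/(4\varepsilon^2) - C_R$ on $\mM_{\frac{2}{3}R}$.  For the two curvature cross terms in \eqref{Lichnerowicz}, note that in the $(\varepsilon,R)$-basis every horizontal $h_j$ equals $e_j/\varepsilon$ for some $g^\mE$-orthonormal $e_j$; hence $(\nabla^{\mE^\perp})^2(h_i,h_j)$ (resp.\ $(\nabla^\mF)^2(h_i,h_j)$) acquires a factor $1/\varepsilon^2$ when both indices are horizontal, $1/\varepsilon$ when exactly one is horizontal, and is $O(1)$ when both are vertical.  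The horizontal-horizontal contributions vanish by Lemma~\ref{t3.1}, and the Clifford products are uniformly bounded, so both curvature cross terms have operator norm bounded by $C/\varepsilon$.  Similarly, Lemma~\ref{lem:esti} together with the trivial bound on vertical derivatives of $\rho\sigma$ gives $|\nabla^{\mE^\perp}_{h_j}(\rho\sigma)|\leqslant C/\varepsilon$ in the horizontal case and $O(1)$ in the vertical case, whence $|T_{\varepsilon,R}|_{\mathrm{op}}\leqslant C/\varepsilon$ and the cross deformation term is pointwise bounded by $C/(\varepsilon^2 R)$.

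Dropping the nonnegative $\rho^2/(\varepsilon^2 R^2)$ term, these estimates combine to give
\begin{equation*}
\|D^{\mM_R}_{\varepsilon,R}s\|^2_{\varepsilon,R}
\geqslant\sum_{i=1}^{\dim\mM}\|\widehat\nabla^{\varepsilon,R}_{h_i}s\|^2_{\varepsilon,R}
+\Bigl(\tfrac{\delta}{4\varepsilon^2}-\tfrac{C_0}{\varepsilon^2 R}-\tfrac{C_1}{\varepsilon}-C_R\Bigr)\|s\|^2_{\varepsilon,R}.
\end{equation*}
I choose $R_1$ so large that $C_0/R_1\leqslant\delta/36$, and then, for each fixed $R\geqslant R_1$, shrink $\varepsilon_3>0$ (depending on $R$) so that the lower order bad terms $C_1/\varepsilon+C_R$ are dominated by $2\delta/(36\varepsilon^2)$.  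The resulting coefficient is at least $\delta/(6\varepsilon^2)>\delta/(9\varepsilon^2)$, which yields \eqref{inter_esti} with, for example, $c_3=1/2$.

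The central technical hurdle is the verification that both curvature cross terms are $O(1/\varepsilon)$ rather than the naive $O(1/\varepsilon^2)$: without the horizontal-flatness statements of Lemma~\ref{t3.1}, the positive $\delta/(4\varepsilon^2)$ coming from the scalar curvature of $M$ would be swamped by curvature noise on the Connes fibration, and the flatness of $F$ (together with its consequent horizontal flatness on $\mM$) is precisely the ingredient that makes this delicate cancellation possible.
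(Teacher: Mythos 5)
Your proposal is correct and follows essentially the same route as the paper: you square the deformed operator as in \eqref{3.13}, compute the cross term via \eqref{[conn,hatc]} to get \eqref{3.10}, bound it by $O(1)/\varepsilon+O_R(1)$ using Lemma \ref{lem:esti}, and combine the Lichnerowicz formula with Lemma \ref{t3.1}, \eqref{scalarcurv} and \eqref{posi_scala} to obtain \eqref{inter_intgr}, before choosing $R$ large and then $\varepsilon$ small. The only difference is that you spell out the $\varepsilon$-scaling analysis of the curvature cross terms that the paper compresses into a citation of \cite{Zhangfoliation}, which is exactly the intended argument.
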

\begin{proof} Recall that one has $\psi(\frac{\rho}{R})=1$ on $\mM_{\frac{2}{3}R}$.
From (\ref{subdirac_epsilonR}), one has on $\mM_{\frac{2}{3}R}$ that
\begin{align}\label{3.13}
\left(D^{\mM_R}_{\varepsilon,R}\right)^2
=\left(D_\varepsilon^{\mM_R} \right)^2+\frac{1}{\varepsilon R}\left[ D_\varepsilon^{\mM_R} ,\widehat c(\rho\sigma)\right]+\frac{\rho^2}{\varepsilon^2R^2}.
\end{align}

In view of  (\ref{subdirac_epsilonR}), (\ref{[conn,hatc]}) and (\ref{3.13}), we compute
\begin{align}\label{3.10}
\left[ D_\varepsilon^{\mM_R},\widehat c(\rho\sigma)\right]=\sum_{i=1}^{\dim M}c_{\varepsilon,R}\left(h_i\right)\widehat c\left(  \nabla^{\mE^\perp}_{h_i}(\rho\sigma)\right)
+ \sum_{j=\dim M+1}^{\dim \mM}c\left(h_j\right)\widehat c\left( \nabla^{\mE^\perp}_{h_j}(\rho\sigma)\right).
\end{align}

From Lemma \ref{lem:esti} and (\ref{3.10}), one finds
\begin{align}\label{3.11}
\left[ D_\varepsilon^{\mM_R},\widehat c(\rho\sigma)\right]= \frac{O(1)}{\varepsilon}+O_R(1).
\end{align}

From Lemma \ref{t3.1}, (\ref{Lichnerowicz}), (\ref{scalarcurv}) and (\ref{posi_scala}), one finds (compare with \cite[(1.71) and (2.28)]{Zhangfoliation})
\begin{align}\label{inter_intgr}
 \left(D_\varepsilon^{\mM_R}\right)^2
\geqslant -\Delta^{\mM_R}_{\varepsilon,R} + \frac{\delta}{4\,\varepsilon^2}  + \frac{ O_R(1)}{\varepsilon},
\end{align}
where $ -\Delta^{\mM_R}_{\varepsilon,R}\geqslant 0$ is the corresponding Bochner Laplacian.

From (\ref{3.13}), (\ref{3.11}) and (\ref{inter_intgr}), one completes the proof of Proposition \ref{prop:intestimate} easily.
\end{proof}

The second interior estimating result can be stated as follows.

\begin{prop}\label{t4.4}
There exists $R_2>0$ such that
for any $R\geqslant R_2$,
there exist $c_4>0$ and $\varepsilon_4>0$ such that for any $0<\varepsilon\leqslant\varepsilon_4$, one has
\begin{align}\label{3.14}
\left\|D^{\mM_R}_{\varepsilon,R}
s\right\|_{\varepsilon,R}^2
\geqslant  c_4\sum_{i=1}^{\dim\mM} \left\|\widehat\nabla^{\varepsilon,R}_{h_i}s\right\|^2_{\varepsilon,R}
+\frac{1}{11\,\varepsilon^2}\|s\|_{\varepsilon,R}^2,
\end{align}
for any $s\in \Gamma(\mM_R,S_{\varepsilon,R}(\mE)\widehat{\otimes}
\Lambda(\mE^{\perp,*})\widehat{\otimes}\Lambda(\mF^{*}))$
supported in $\mM_{\frac{7}{8}R}\setminus\mM_{\frac{1}{3}R}$\,.
\end{prop}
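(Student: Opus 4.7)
The support region $\mM_{\frac{7}{8}R}\setminus\mM_{\frac{1}{3}R}$ strictly contains that of Proposition \ref{prop:intestimate} but reaches out past $\mM_{\frac{2}{3}R}$ into the region where $g^{T\mM}_{\varepsilon,R}$ is no longer purely adiabatic, so the scalar-curvature positivity used for \eqref{inter_esti} is not available throughout. On the other hand, on this new support one has the pointwise lower bound $\rho^2/(\varepsilon^2 R^2)\geq 1/(9\varepsilon^2)$, and the idea is to use this zeroth-order potential (coming from $\widehat{c}(\rho\sigma)^2=\rho^2$) as the principal source of positivity, relegating scalar curvature to a secondary role.

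The plan is to introduce a smooth partition of unity $\phi_1^2+\phi_2^2=1$ with ${\rm Supp}(\phi_1)\subseteq \mM_{\frac{2}{3}R}$, $\phi_1\equiv 1$ on $\mM_{\frac{3}{5}R}$, and ${\rm Supp}(\phi_2)\subseteq \mM_R\setminus\mM_{\frac{3}{5}R}$. Then $|d\phi_j|=O(1/R)$ uniformly in $\varepsilon$, both supports are interior to $\mM_R$, and standard commutator manipulations give
$$
\left\|D^{\mM_R}_{\varepsilon,R}s\right\|^2_{\varepsilon,R}\;\geq\; \sum_{j=1}^{2}\left\|D^{\mM_R}_{\varepsilon,R}(\phi_j s)\right\|^2_{\varepsilon,R}-\frac{C}{R^2}\|s\|^2_{\varepsilon,R},
$$
reducing the problem to localized estimates for each $\phi_j s$. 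For $\phi_1 s$, whose support lies in the adiabatic region $\mM_{\frac{2}{3}R}$, Proposition \ref{prop:intestimate} applies verbatim and produces a bound much stronger than needed.

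For $\phi_2 s$, supported in $\{\rho\geq \tfrac{3}{5}R\}$, one expands $(D^{\mM_R}_{\varepsilon,R})^2$ via the Lichnerowicz formula \eqref{Lichnerowicz} for the mixed metric, the commutator identity analogous to \eqref{3.13}, and the pointwise estimate \eqref{3.11}, all adapted to $g^{T\mM}_{\varepsilon,R}$ by means of Lemmas \ref{t3.1} and \ref{lem:esti}. The horizontal flatness statements $(\nabla^{\mE^\perp})^2(X,Y)=0$ and $(\nabla^{\mF})^2(X,Y)=0$ for $X,Y\in \mE$ are essential: they prevent any $\varepsilon^{-2}$-sized Clifford cross-terms from appearing, so on ${\rm Supp}(\phi_2)$ all contributions to $(D^{\mM_R}_{\varepsilon,R})^2$ other than the deformation potential are dominated by $-\Delta^{\mM_R}_{\varepsilon,R}+C/(\varepsilon^2 R)+O_R(1)/\varepsilon+O_R(1)$. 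Combined with $\rho^2/(\varepsilon R)^2\geq 1/(9\varepsilon^2)$ on the support and with the non-negative Bochner Laplacian supplying the gradient norm, this gives
$$
\left\|D^{\mM_R}_{\varepsilon,R}(\phi_2 s)\right\|^2_{\varepsilon,R}\geq c\sum_{i=1}^{\dim\mM}\left\|\widehat{\nabla}^{\varepsilon,R}_{h_i}(\phi_2 s)\right\|^2_{\varepsilon,R}+\left(\frac{1}{9\varepsilon^2}-\frac{C}{\varepsilon^2 R}-\frac{O_R(1)}{\varepsilon}\right)\|\phi_2 s\|^2_{\varepsilon,R}.
$$
Choosing $R_2$ so large that $C/R<2/99$ and then $\varepsilon_4$ so small that the $O_R$-terms fall below $2/(99\varepsilon^2)$ absorbs the errors below the gap $1/9-1/11=2/99$ and yields \eqref{3.14}.

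The main technical obstacle is controlling the Lichnerowicz curvature contributions on the transition annulus $\{\tfrac{2}{3}R\leq \rho\leq \tfrac{3}{4}R\}\subseteq {\rm Supp}(\phi_2)$, where the horizontal metric factor $\varepsilon^2(1-\psi(\rho/R))+\psi(\rho/R)$ interpolates between $\varepsilon^2 g^\mE$ and $g^\mE$. One must verify that the additional O'Neill and gradient-of-$\psi$ terms produce errors of size at most $O(R^{-1})\cdot O(\varepsilon^{-2})$, which is exactly what justifies taking $R$ large in the first step; elsewhere the estimates already appear in the proofs of Propositions \ref{prop:bdestimate} and \ref{prop:intestimate}.
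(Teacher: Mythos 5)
There is a genuine gap in how you treat the inner piece $\phi_1 s$. You invoke Proposition \ref{prop:intestimate} and assert that it ``produces a bound much stronger than needed'', but the zeroth-order constant in \eqref{inter_esti} is $\frac{\delta}{9\varepsilon^2}$, where $\delta$ is the (possibly very small) lower bound \eqref{posi_scala} for $k^{TM}$. Whenever $\delta<\frac{9}{11}$ this is strictly weaker than the $\delta$-independent constant $\frac{1}{11\varepsilon^2}$ asserted in Proposition \ref{t4.4}, so after recombining the two pieces your argument only gives \eqref{3.14} with $\frac{1}{11}$ replaced by (roughly) $\min\left(\frac{\delta}{9},\frac{1}{11}\right)$ minus the localization errors; a $\delta$-dependent constant of this kind would still suffice for Theorem \ref{t3.4}, but it does not prove the proposition as stated, and the slip is conceptual: on the inner part of the support you are still treating scalar curvature as the source of positivity. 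The point you miss --- and the paper's actual argument --- is that no partition of unity is needed at all: on the whole support $\mM_{\frac78 R}\setminus\mM_{\frac13 R}$ one has $\rho/R\geqslant\frac13$, so the deformation potential $\rho^2/(\varepsilon R)^2\geqslant\frac{1}{9\varepsilon^2}$ is available everywhere; combining \eqref{3.13} with the commutator bound \eqref{3.11}, for $R$ large and $\varepsilon$ small one gets
\begin{align*}
\left\|D^{\mM_R}_{\varepsilon,R}s\right\|^2_{\varepsilon,R}\geqslant \left\|D^{\mM_R}_{\varepsilon}s\right\|^2_{\varepsilon,R}+\frac{1}{10\,\varepsilon^2}\|s\|^2_{\varepsilon,R},
\end{align*}
which is \eqref{3.16}, and the gradient term is then recovered from $\|D^{\mM_R}_{\varepsilon}s\|^2_{\varepsilon,R}$ via the Lichnerowicz formula, the scalar-curvature contribution being simply discarded because it is nonnegative by \eqref{scalarcurv} and \eqref{posi_scala}, at the cost of lowering $\frac{1}{10}$ to $\frac{1}{11}$. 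In other words, positivity of $k^{TM}$ never produces the $\frac{1}{\varepsilon^2}$-term here; it only guarantees that the Lichnerowicz remainder is not itself of size $-\frac{1}{\varepsilon^2}$.

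Two further inaccuracies in the $\phi_2$ estimate should be repaired if you keep your decomposition. First, on the adiabatic portion of ${\rm Supp}(\phi_2)$ (where $\frac35 R\leqslant\rho\leqslant\frac23 R$) the term $\frac{k_{\varepsilon,R}}{4}$ is of size $\frac{1}{\varepsilon^2}$, not $O\!\left(\frac{1}{\varepsilon^2R}\right)+\frac{O_R(1)}{\varepsilon}$ as your domination statement suggests; it is harmless only because it is nonnegative under \eqref{posi_scala}, and this must be said. Second, the claim that the needed estimates on the interpolation annulus $\frac23 R\leqslant\rho\leqslant\frac34 R$ ``already appear'' in the proofs of Propositions \ref{prop:bdestimate} and \ref{prop:intestimate} is not correct: those propositions concern regions where the metric is, respectively, the undeformed $g^{T\mM}$ and the purely adiabatic $g^{T\mM}_{\varepsilon}$, and neither region meets the annulus, so the control of the curvature of the interpolated metric there is a verification your proof still owes (in the paper it is subsumed in the passage from \eqref{3.16} to \eqref{3.14}).
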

\begin{proof} On $\mM_{\frac{7}{8}R}\setminus\mM_{\frac{1}{3}R}$,  one has
\begin{align}\label{3.15}
\frac{\rho}{R}\geqslant \frac{1}{3}.
\end{align}
Also, one finds that (\ref{3.11}) still holds on $\mM_{\frac{7}{8}R}\setminus\mM_{\frac{1}{3}R}$.

From (\ref{3.13}), (\ref{3.11}) and (\ref{3.15}), one finds that there exists $R_3>0$ such that for any $R\geqslant R_3$, 
 when $\varepsilon>0$ is small enough, one has
\begin{align}\label{3.16}
\left\|D^{\mM_R}_{\varepsilon,R}
s\right\|_{\varepsilon,R}^2
\geqslant  \left\|D^{\mM_R}_{\varepsilon}s\right\|^2_{\varepsilon,R}
+\frac{1}{10\,\varepsilon^2}\|s\|_{\varepsilon,R}^2,
\end{align}
for any $s\in \Gamma(\mM_R,S_{\varepsilon,R}(\mE)\widehat{\otimes}
\Lambda(\mE^{\perp,*})\widehat{\otimes}\Lambda(\mF^{*}))$
supported in $\mM_{\frac{7}{8}R}\setminus\mM_{\frac{1}{3}R}$\,.

Formula (\ref{3.14}) follows easily from (\ref{3.16}).
\end{proof}

\subsection{Proof of Theorem \ref{t3.4}}\label{s4.3}

Let $\beta_1: [0,1]\rightarrow [0,1]$ be a smooth function such that $\beta_1(t)=0$ for $0 \leqslant t \leqslant\frac{3}{4}+\frac{1}{100}$, while $\beta_1(t)=1$ for $\frac{3}{4}+\frac{1}{50}\leqslant t \leqslant 1$. Let $\beta_2: [0,1]\rightarrow [0,1]$ be a smooth function such that $\beta_2(t)=1$ for $0 \leqslant t \leqslant\frac{1}{2}+\frac{1}{100}$, while $\beta_2(t)=0$ for $\frac{2}{3}-\frac{1}{50}\leqslant t \leqslant 1$.

Inspired by \cite[pp.~115-116]{MR1188532}, let $\alpha_1$, $\alpha_2$ and $\alpha_3$ be the smooth functions on $\mM_R$ defined by
\begin{align}\label{3.17}
\alpha_1=\frac{ \beta_1\left(\frac{\rho}{R}\right)}{\sqrt{\beta_1\left(\frac{\rho}{R}\right)^2+\beta_2\left(\frac{\rho}{R}\right)^2+\left(1-\beta_1\left(\frac{\rho}{R}\right)-\beta_2\left(\frac{\rho}{R}\right)\right)^2}}\ ,
\end{align}
\begin{align}\label{3.18}
\alpha_2=\frac{ \beta_2\left(\frac{\rho}{R}\right)}{\sqrt{\beta_1\left(\frac{\rho}{R}\right)^2+\beta_2\left(\frac{\rho}{R}\right)^2+\left(1-\beta_1\left(\frac{\rho}{R}\right)-\beta_2\left(\frac{\rho}{R}\right)\right)^2}}
\end{align}
and
\begin{align}\label{3.19}
\alpha_3=\frac{1- \beta_1\left(\frac{\rho}{R}\right)- \beta_2\left(\frac{\rho}{R}\right) }{\sqrt{\beta_1\left(\frac{\rho}{R}\right)^2+\beta_2\left(\frac{\rho}{R}\right)^2+\left(1-\beta_1\left(\frac{\rho}{R}\right)-\beta_2\left(\frac{\rho}{R}\right)\right)^2}}.
\end{align}
Then $\alpha_1^2+\alpha^2_2+\alpha_3^2=1$ on $\mM_R$. Thus,    for any $s\in \Gamma(\mM_R,S_{\varepsilon,R}(\mE)\widehat{\otimes}
\Lambda(\mE^{\perp,*})\widehat{\otimes}\Lambda(\mF^{*}))
$, one has
\begin{multline}\label{cutoff_alpha}
\left\|D^{\mM_R}_{\varepsilon,R}s\right\|_{\varepsilon,R}^{2}
=\sum_{i=1}^3 \left\|\alpha_i D^{\mM_R}_{\varepsilon,R}s\right\|_{\varepsilon,R}^{2}
\\
\geqslant\frac{1}{2} \sum_{i=1}^3\left\|D^{\mM_R}_{\varepsilon,R}\left(\alpha_is\right)\right\|_{\varepsilon,R}^{2}
-\sum_{i=1}^3\left\|{c}_{\varepsilon,R}\left(d\alpha_i\right) s\right\|_{\varepsilon,R}^2\ ,
\end{multline}
where we identify  each $d\alpha_i$ ($i=1,\,2,\,3$) with the gradient of $\alpha_i$.

From Lemma \ref{lem:esti} and (\ref{3.17})-(\ref{3.19}), one finds (compare with \cite[(2.47)]{Zhangfoliation})
\begin{align}\label{3.20}
\sum_{i=1}^3\left|{c}_{\varepsilon,R}\left(d\alpha_i\right) \right|_{\varepsilon,R} =\frac{O(1)}{\varepsilon R}+O_R(1).
\end{align}

Clearly, ${\rm Supp}(\alpha_1s)\subseteq \mM_R\setminus \mM_{\frac{3}{4}R}$,
${\rm Supp}(\alpha_2s)\subseteq   \mM_{\frac{2}{3}R}$ and ${\rm Supp}(\alpha_3s)\subseteq \mM_{\frac{7}{8}R}\setminus \mM_{\frac{1}{3}R}$.

From Propositions \ref{prop:bdestimate}-\ref{t4.4} and formulas (\ref{cutoff_alpha}) and (\ref{3.20}),
one completes the proof of Theorem \ref{t3.4} easily (compare with \cite[pp.~115-117]{MR1188532}).

The proof of Theorem \ref{t0.1} is thus also completed.


\end{document}